\newcommand{\numberset}{\mathbb} 
\newcommand{\R}{\numberset{R}} 
\DeclareMathOperator{\Tr}{Tr}
\numberwithin{equation}{section}
\newtheorem{thm}{\indent\bf Theorem}[section]
\newtheorem{lemma} [thm] {\indent\bf Lemma}
\newtheorem{prop}[thm]{\indent\bf Proposition}
\begin{document}
\title[Weighted multipolar Hardy inequalities]
{Weighted Hardy inequalities and
Ornstein-Uhlenbeck type operators perturbed by multipolar inverse square potentials}

\author[A. Canale]{Anna Canale}
\address{Dipartimento di Ingegneria dell'Informazione ed Elettrica e Matematica Applicata (Diem), 
Universit\'a degli Studi di Salerno, Via Giovanni Paolo II, 132, 84084 Fisciano
(Sa), Italy.}
\email{acanale@unisa.it}
\author[F.Pappalardo]{Francesco Pappalardo}
\address{Dipartimento di Matematica e Applicazioni \textquotedblleft Renato  Caccioppoli\textquotedblright,
Universit\'a degli Studi di Napoli Federico II, Complesso Monte S.  Angelo, Via Cinthia, 80126 Napoli, Italy.}
\email{francesco.pappalardo@unina.it}

\thanks{The authors are members of the Gruppo Nazionale per l'Analisi Matematica, la Probabilit\'a e le loro Applicazioni 
(GNAMPA) of the Istituto Nazionale di Alta Matematica (INdAM)}

\subjclass[2010]{35K15, 35K65, 35B25, 34G10, 47D03}

\begin{abstract}
We give necessary and sufficient conditions for the
existence of weak solutions of a parabolic problem corresponding to the Kolmogorov operators perturbed 
by a multipolar inverse square potential
\begin{equation*}
Lu+Vu=\left(\Delta u+\frac{\nabla \mu}{\mu}\cdot \nabla u\right)+
\sum_{i=1}^n \frac{c}{|x-a_i|^2}u, \quad x\in \R^N,\>c>0,\> a_1, \dots, a_n\in \R^N,
\end{equation*}
defined on smooth functions where $\mu$ in the drift term is a probability density on $R^N$.
To this aim we state a weighted Hardy inequality 
\begin{equation*}
c\sum_{i=1}^n\int_{\R^N}\frac{\varphi^2}{|x-a_i|^2}\,d\mu
\leq\int_{\R^N}|\nabla \varphi |^2d\mu+
K\int_{\R^N} \varphi^2d\mu,\quad \varphi\in H^1_\mu,\quad c\le c_o,
\end{equation*}
where $c_o=c_o(N):=\left( \frac{N-2}{2} \right)^{2}$,
with respect to the Gaussian probability measure $d\mu=\mu(x)dx$
which is the unique invariant measure for Ornstein-Uhlenbeck type operators.
We state the optimality of the constant  $c_o$ and, then,
the nonexistence of positive exponentially bounded solutions to the parabolic problem. 
\end{abstract}
\maketitle

\section{Introduction}
The paper deals with a class of Kolmogorov operators 
\begin{equation}\label{L}
Lu=\Delta u+\frac{\nabla \mu}{\mu}\cdot\nabla u,
\end{equation}
 perturbed by a multipolar inverse square potential
\begin{equation}\label{V}
V(x)=\sum_{i=1}^n \frac{c}{|x-a_i|^2},\quad x\in \R^N,\quad c>0,\quad a_1, \dots, a_n\in \R^N,
\end{equation}
defined on smooth functions and $\mu$ is a probability density on $\R^N$.

From the mathematical point of view, the interest in inverse square potentials of type $V\sim\frac{c}{|x|^2}$
relies in the criticality: they have the same homogeneity as the Laplacian and do not belong to the Kato's class,
then they cannot be regarded as a lower order perturbation term.
Furthermore the study of such singular potentials is motived by applications  to many fields, for example 
in many physical contexts as molecular physics
\cite{ Levy}, 
quantum cosmology 
(see e.g. \cite{ BerestyckiEsteban}),
quantum mechanics \cite{BarasGoldstein} and
combustion models 
 \cite{ Gelfand}.

Multipolar potentials are associated with the interaction of a finite number of electric dipoles 
as, for example, in molecular systems consisting of $n$ nuclei of unit charge located in
a finite number of points $a_1, \dots, a_n$ and of $n$ electrons. The Hartree-Fock model
describes these  systems (see \cite{Catto}).

It is well known that if $L=\Delta$ and $V\le \frac{c}{|x|^{2-\epsilon}},\,c>0,\,\epsilon>0$, 
then the corresponding initial value problem is well-posed. But for $\varepsilon=0$ the problem
may not have positive solution.
In \cite{BarasGoldstein} Baras and Goldstein showed that 
the evolution problem associated to $\Delta+V$ admits a unique positive solution
if $c\leq c_o(N):=\left( \frac{N-2}{2} \right)^{2}$ and no positive solutions exist if $c>c_o(N)$
(see also \cite{CabreMartel} for a different approach involving the Hardy inequality).
When it exists, the solution is
exponentially bounded, on the contrary,  if $c>c_o(N)$, there is the so called instantaneous blowup phenomena.

A similar behaviour was obtained in \cite{GGR} with the potential $V=\frac{c}{|x|^2}$  and replacing 
the Laplacian by the Kolmogorov operator $L$ . See also \cite{CGRT} where 
the hypotheses on $\mu$ allow the drift term to be of the type 
$\frac{\nabla \mu}{\mu}=-|x|^{m-2}\,x$, $m>0$.

In this paper we consider the generalized Ornstein-Uhlenbeck operator in $\R^N$
\begin{equation}\label{OU}
Lu=\Delta u -\sum_{i=1}^n A(x-a_i)\cdot \nabla u,
\end{equation}
where $A$ is a positive definite real Hermitian $N\times N$-matrix, and the associated evolution problem
$$
(P)\quad \left\{\begin{array}{ll}
\partial_tu(x,t)=Lu(x,t)+V(x)u(x,t),\quad \,x\in {\mathbb R}^N, t>0,\\
u(\cdot ,0)=u_0\geq 0\in L_\mu^2,
\end{array}
\right. $$
with the multipolar singular potential  $V$ defined in (\ref{V}) and $L_\mu^2$ a suitable weighted space.

We state existence and nonexistence results in the case of the generalized Ornstein-Uhlenbeck operator
using the relationship between the weak solution of $(P)$
and the {\it bottom of the spectrum} of the operator $-(L+V)$
\begin{equation*}
\lambda_1(L+V):=\inf_{\varphi \in H^1_\mu\setminus \{0\}}
\left(\frac{\int_{{\mathbb R}^N}|\nabla \varphi |^2\,d\mu
-\int_{{\mathbb R}^N}V\varphi^2\,d\mu}{\int_{{\mathbb R}^N}\varphi^2\,d\mu}
\right).
\end{equation*}
When $\mu=1$ Cabr\'e and Martel in \cite{CabreMartel}  
showed that the boundedness  of $\lambda_1(\Delta+V)$, 
$0\le V\in L_{loc}^1({\mathbb R}^N)$, is a necessary 
and sufficient condition for the existence of positive exponentially bounded in time
solutions to the associated initial value problem. 
Later in \cite{GGR} the authors extended  such a result to the case of Kolmogorov operators.

The estimate of the bottom of the spectrum $\lambda_1(L+V)$ is equivalent to the weighted Hardy inequality 
with $V(x)=\sum_{i=1}^n \frac{c}{|x-a_i|^2}$, $c\le c_o(N)$,  
\begin{equation}\label{whi}
\int_{\R^N}V\,\varphi^2\,d\mu
\leq\int_{\R^N}|\nabla \varphi |^2d\mu+
K\int_{\R^N} \varphi^2d\mu,\quad \varphi\in H^1_\mu,
\end{equation}
and the sharpness of the best  possible constant. As we will see in the next Section, $H^1_\mu$ denotes
an appropriate weighted Sobolev space.

Then the existence of positive solutions to $(P)$
is related to the Hardy inequality (\ref{whi})  and the nonexistence 
is due to the optimality of the constant $c_o$. 

Our results about Hardy-type inequalities (\ref{whi}) (see Theorem \ref{wH} and Theorem \ref{c_o su n} in Section 3)
fits into the context of the so called {\sl multipolar Hardy inequalities}.

When $\mu=1$ and ${\mathcal L}$ is the Schr\"odinger operator
$$
{\mathcal L}=-\Delta-\sum_{i=1}^n\frac{c_i^+}{|x-a_i|^2},
$$
where $n\ge2$, $c_i\in \R$, $c_i^+=\max\{c_i,0\}$, for any $i\in \{1,\dots, n\}$, Felli, Marchini and Terracini in
\cite{FelliMarchiniTerracini} proved that the associated quadratic form
$$
Q(\varphi):=\int_{\R^N}|\nabla \varphi |^2\,dx
-\sum_{i=1}^n c_i\int_{{\mathbb R}^N}\frac{\varphi^2}{|x-a_i|^2}\,dx
$$
is positive if $\sum_{i=1}^nc_i^+<\frac{(N-2)^2}{4}$, conversely if
$\sum_{i=1}^nc_i^+>\frac{(N-2)^2}{4}$ there exists a configuration of poles such that $Q$ is not positive.
Later Bosi, Dolbeaut and Esteban in \cite{BDE} proved that for any $c\in\left(0,\frac{(N-2)^2}{4}\right]$ there exists 
a positive constant such that (\ref{whi}) holds.
Recently Cazacu and Zuazua in \cite{CazacuZuazua}, improving a result stated in 
\cite{BDE}, obtained the inequality (\ref{whi}) with $K=0$ and 
$V= c\sum_{1\le i<j\le n}\frac{|a_i-a_j|^2}{ |x-a_i|^2|x-a_j|^2}$.

As far as we know there are no results in the literature about the weighted multipolar Hardy inequalities.

In this paper we are motivated to consider the Gaussian measure
 $d\mu(x)=\mu(x)dx=Ce^{-\frac{1}{2}\sum_{i=1}^n\langle A(x-a_i),(x-a_i)\rangle}dx$, with $C$ normalization constant,
which is the unique invariant measure for the Ornstein-Uhlenbeck type operator (\ref{OU})
whose drift term is unbounded at infinity. 

In Section 3 we will prove the inequality (\ref{whi}) in a direct way starting from the result obtained in \cite{BDE}
with the Lebesgue measure
and exploiting a suitable bound satisfied by the function $\mu$. 
Furthermore we will state  the optimality of the constant $c_o$ in (\ref{whi}).

Afterwards, in Section 4,  we will give a proof 
of the inequality through the so called \textit{IMS}  (Ismaligov, Morgan, Morgan-Simon, Sigal), method 
based on a suitable partition of unity in $\R^N$, reasoning as in \cite{BDE}. 
To this aim we need to use a Hardy inequality in the case $n=1$
which we will  prove. Indeed in the IMS method a fundamental tool is an estimate with a single pole
which allows us to reach the optimal constant $c_o(N)$ in the inequality.

In Section 5 we will state one of the main results, Theorem \ref{Hardy e CM theor}, 
which put together weighted Hardy inequality and Theorem \ref{theor as CM}
sating an existence and nonexistence result.
Furthermore, using the bilinear form associated to the operator $-(L+V)$, we will state the generation 
of an analytic $C_0$-semigroup and the positivity of the solution arguing as in \cite{ArendtGoldstein}.

\bigskip\bigskip

\section{Notation and preliminary results}
\bigskip

Let us consider Kolmogorov operators $L$ defined in (\ref{L}) and
the functions $\mu\in C_{loc}^{1,\alpha}\left( \R^N \right)$
for some $\alpha\in (0,1)$, $\mu(x)>0$ for all $x\in \R^N$.

It is known that the operator $L$ with domain
$$D_{max}(L)=\lbrace u\in C_b(\R^N)\cap W_{loc}^{2,p}(\R^N) \text{ for all }  1<p<\infty, 
Lu\in C_b(\R^N)\rbrace$$
is the weak generator of a not necessarily $C_0$-semigroup $\lbrace T(t)\rbrace_{t\ge 0}$ in $C_b(\R^N)$. 
Since $\int_{\R^N}Lu\,d\mu=0$ for any $u\in C_c^{\infty}(\R^N)$,
where $d\mu=\mu (x)dx$,
then $d\mu$ is the invariant measure for $\lbrace T(t)\rbrace_{t\ge 0}$ in $C_b(\R^N)$. 
So we can extend it to a positive preserving and analytic $C_0$-semigroup on $L^2_\mu:=L^2(\R^N,d\mu)$, 
whose generator is still denoted by $L$. 

Furthermore we denote by $H^1_\mu$ be the set of all the functions $f\in L^2_\mu$ having distributional derivative
 $\nabla f$ in $( L^2_\mu)^N$. 

We recall the following proposition 
 (see \cite[Chapter 8]{BertoldiLorenzi} for more details).
\begin{prop}
The following assertions hold:
\begin{itemize}
\item[i)] $C_c^\infty ({\mathbb R}^N)$ is a core for $L$ in $L^2_\mu$;
\item[ii)] $D(L)$ is continuously and densely embedded in $H^1_\mu$; 
\item[iii)] $\int_{{\mathbb R}^N}\nabla u\cdot \nabla
v\,d\mu=-\int_{{\mathbb R}^N}(Lu)v\,d\mu ,\quad u\in D(L),\,v\in H^1_\mu$;
\item[iv)] for any $t>0$, $T(t)L^2_\mu\subset H^1_\mu$.
\end{itemize}
\end{prop}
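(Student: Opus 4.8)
The plan is to work with the restriction $L_0:=L|_{C_c^\infty(\R^N)}$ and to exploit the identity $\mu\,Lu=\operatorname{div}(\mu\nabla u)$, which makes $L$ formally symmetric for $d\mu$. First, for $u,v\in C_c^\infty(\R^N)$ an integration by parts gives $\int_{\R^N}(Lu)v\,d\mu=-\int_{\R^N}\nabla u\cdot\nabla v\,d\mu=\int_{\R^N}u(Lv)\,d\mu$, so $L_0$ is symmetric on $L^2_\mu$ and $\langle L_0u,u\rangle_{L^2_\mu}=-\|\nabla u\|_{L^2_\mu}^2\le 0$. Moreover $C_c^\infty(\R^N)$ lies in the domain of the $L^2_\mu$-realization of $L$ and $L$ acts there as the differential expression: for $u\in C_c^\infty(\R^N)$ the parabolic identity $T(t)u-u=\int_0^tT(s)Lu\,ds$ holds pointwise with $\|t^{-1}(T(t)u-u)\|_\infty\le\|Lu\|_\infty$, hence $t^{-1}(T(t)u-u)\to Lu$ in $L^2_\mu$ by dominated convergence, $\mu$ being a probability measure.

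The crux of the matter, which together with the previous remark yields (i), is to show that $L_0$ is essentially self-adjoint, i.e.\ that $\operatorname{Ran}(I-L_0)$ is dense in $L^2_\mu$. If $f\in L^2_\mu$ is orthogonal to it, then $f$ solves $\operatorname{div}(\mu\nabla f)=\mu f$ in the sense of distributions; since $\mu\in C^{1,\alpha}_{loc}(\R^N)$ is strictly positive, interior elliptic regularity gives $f\in H^1_{loc}$ (indeed $f\in C^{2,\alpha}_{loc}$). Testing the equation against $\chi_R^2f$, where $\chi_R\in C_c^\infty(\R^N)$ equals $1$ on $B_R$, is supported in $B_{2R}$ and satisfies $|\nabla\chi_R|\le C/R$, and using Cauchy--Schwarz to absorb the cross term, one obtains
\begin{equation*}
\int_{\R^N}f^2\chi_R^2\,d\mu+\tfrac12\int_{\R^N}|\nabla f|^2\chi_R^2\,d\mu\le\frac{2C^2}{R^2}\int_{\R^N}f^2\,d\mu,
\end{equation*}
whose right-hand side tends to $0$ as $R\to\infty$; hence $f=0$. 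Thus $A:=\overline{L_0}$ is self-adjoint, $A\le 0$, and generates a bounded analytic $C_0$-semigroup on $L^2_\mu$; being a closed extension of $L_0$, it must coincide with the $L^2_\mu$-realization of $L$ (which is closed, generates a $C_0$-semigroup, and extends $L_0$ by the first paragraph), because a semigroup generator admits no proper extension that is again a generator. Therefore $L=\overline{L_0}$, i.e.\ $C_c^\infty(\R^N)$ is a core for $L$.

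For (ii) and (iii), take $u\in D(L)$ and $u_k\in C_c^\infty(\R^N)$ with $u_k\to u$, $Lu_k\to Lu$ in $L^2_\mu$; then $\|\nabla(u_k-u_\ell)\|_{L^2_\mu}^2=-\langle L_0(u_k-u_\ell),u_k-u_\ell\rangle_{L^2_\mu}\to 0$, so $\nabla u_k$ converges in $(L^2_\mu)^N$, whence $u\in H^1_\mu$ and $\|\nabla u\|_{L^2_\mu}^2=-\langle Lu,u\rangle_{L^2_\mu}\le\tfrac12(\|Lu\|_{L^2_\mu}^2+\|u\|_{L^2_\mu}^2)$, which gives the continuous embedding $D(L)\hookrightarrow H^1_\mu$; density holds because $C_c^\infty(\R^N)\subset D(L)$ is dense in $H^1_\mu$ (truncate $u\in H^1_\mu$ by $\chi_R$, noting $\|u\nabla\chi_R\|_{L^2_\mu}\le(C/R)\|u\|_{L^2_\mu}\to 0$, then mollify on a fixed ball where $\mu$ and $\mu^{-1}$ are bounded so the weighted and unweighted $H^1$ norms are equivalent). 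Passing to the limit in $\int_{\R^N}(Lu_k)v_j\,d\mu=-\int_{\R^N}\nabla u_k\cdot\nabla v_j\,d\mu$ with $u_k\to u$ in $D(L)$ and $C_c^\infty(\R^N)\ni v_j\to v$ in $H^1_\mu$ gives (iii). Finally, (iv) is immediate: since $L=\overline{L_0}$ is self-adjoint and $\le 0$, the spectral theorem yields $T(t)L^2_\mu\subset\bigcap_{k\ge1}D(L^k)\subset D(L)\subset H^1_\mu$ for every $t>0$.

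The main obstacle is the second step: promoting an $L^2_\mu$-distributional solution of $\operatorname{div}(\mu\nabla f)=\mu f$ to an $H^1_{loc}$ (in fact classical) solution, and then justifying the cutoff integration by parts. This is feasible here precisely because the unbounded drift of the Ornstein--Uhlenbeck operator has been absorbed into the weight $\mu$, so the cutoff only meets $|\nabla f|$ and not coefficients growing at infinity; the remaining steps are routine approximation arguments.
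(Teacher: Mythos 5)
Your argument is correct, but note that the paper does not actually prove this proposition: it is recalled as known and attributed to \cite[Chapter 8]{BertoldiLorenzi}, so there is no in-paper proof to compare against. What you have written is essentially the standard proof from that circle of ideas: rewrite $\mu Lu=\operatorname{div}(\mu\nabla u)$, observe that $L_0=L|_{C_c^\infty(\R^N)}$ is symmetric and nonpositive in $L^2_\mu$, prove essential self-adjointness by showing that any $f\in L^2_\mu$ orthogonal to $\operatorname{Ran}(I-L_0)$ solves $\operatorname{div}(\mu\nabla f)=\mu f$ and vanishes by a Caccioppoli cutoff (this is a Wielens-type argument, and the absorption of the unbounded drift into the weight is exactly what makes the cutoff work), then identify $\overline{L_0}$ with the semigroup generator because a generator has no proper generating extension; items (ii)--(iv) then follow as you say from $\|\nabla u\|_{L^2_\mu}^2=-\langle Lu,u\rangle_{L^2_\mu}$, density of $C_c^\infty(\R^N)$ in $H^1_\mu$, and analyticity. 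The one step you should not wave at is the promotion of the $L^2_{loc}$ distributional (very weak) solution of $\operatorname{div}(\mu\nabla f)=\mu f$ to an $H^1_{loc}$ solution: for general $\mu\in C^{1,\alpha}_{loc}$ this requires a genuine regularity argument (transposition/Friedrichs mollification for the first-order term), and you correctly flag it as the main obstacle; for the specific Gaussian density \eqref{mu} used throughout the paper, $\mu$ is $C^\infty$ and this step is standard elliptic bootstrap, so your proof is complete in the setting where the proposition is actually applied.
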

From i) and ii) follows that $C_c^\infty ({\mathbb R}^N)$ is densely embedded in $H^1_\mu$.
Then we can regard $H^1_\mu$ also as the completion of $C_{c}^{\infty}(\R^N)$ in the norm 
$$\|u\|_{H^1_\mu}^2 := \|u\|_{L^2_\mu}^2 + \|\nabla u\|_{L^2_\mu}^2.$$
The operator $L$ can also be defined via the bilinear form
\begin{equation}\label{a}
a_\mu(u,v)=\int _{\R^N}\nabla u\cdot \nabla v \,d\mu
\end{equation}
on $H^1_\mu$.
This is immediately clear by integrating by parts in (\ref{a}). Indeed
\[
a_\mu(u,v)=-\int_{\R^N} Luv \,d\mu,
\qquad u,v\in C_c^{\infty}(\R^N).
\]
Let us recall the problem
$$
\left(P\right)\quad
\left\lbrace
\begin{array}{ll}
\partial_t u(x,t)=Lu(x,t)+V(x)u(x,t), & t>0, x\in \R^N, N\ge 3,
\\
u(\cdot,t)=u_0\in L^2_\mu,
\end{array}\right.
$$
where $L$ is as in (\ref{L}).
We say that $u$ is a weak solution to ($P$) if, for each $T, R>0 $, we have
$$u\in C(\left[ 0, T \right] , L^2_\mu ), \quad Vu\in L^1(B_R \times \left( 0,T\right) , d\mu dt )$$
and
$$\int_0^T\int_{\R^N}u(-\partial_t\phi - L\phi )\,d\mu dt -\int_{\R^N}u_0\phi(\cdot ,0)\,d\mu =
\int_0^T\int_{\R^N}Vu\phi \, d\mu dt$$
for all $\phi \in W_2^{2,1}(\R^N \times \left[ 0,T\right])$ having compact support with $\phi(\cdot ,T)=0$, 
where $B_R$ denotes the open ball of $\R^N$ of radius $R$ centered at $0$.\\

For any $\Omega\subset \R^N$, $ W_2^{2,1}(\Omega\times (0,T)) $ is the parabolic Sobolev space 
of the functions $u\in L^2(\Omega \times (0,T)) $ having weak space derivatives $D_x^{\alpha}
u\in L^2(\Omega \times (0,T))$ for $|\alpha |\le 2$ and weak time derivative
$\partial_t u \in L^2(\Omega \times (0,T))$ equipped with the norm 

$$ \|u\|_{W_2^{2,1}(\Omega\times (0,T))}:= \Biggl( 
\|u\|_{L^2(\Omega \times (0,T))}^2 + \|\partial_t u\|_{L^2(\Omega \times (0,T))}^2 +
 \sum_{1\le |\alpha |\le 2} \|D^{\alpha}u\|_{L^2(\Omega \times (0,T))}^2
\Biggr)^{\frac{1}{2}}. $$
\medskip 

Now we can state the following result.

\begin{thm}\label{theor as CM}
Assume $0<\mu \in C^{1,\alpha}_{loc}({\mathbb R}^N)$
is a probability density on ${\mathbb R}^N$ and
$0\le V\in L_{loc}^1({\mathbb R}^N)$. Then the following assertion hold:
\begin{itemize}
\item[(i)] If $\lambda_1(L+V)>-\infty$, then there exists a
positive weak solution $u\in C([0,\infty),L^2_\mu)$ of $(P)$ satisfying
\begin{equation}\label{eq 1}
\|u(t)\|_{L^2_\mu}\le Me^{\omega t}\|u_0\|_{L^2_\mu},\quad t\ge0
\end{equation}
for some constants $M\ge 1$ and $\omega \in {\mathbb R}$.
 \item[(ii)] If
$\lambda_1(L+V)=-\infty$, then for any $0\le u_0\in
L^2_\mu\setminus \{0\},$ there is no positive weak solution of $(P)$
satisfying \eqref{eq 1}.
\end{itemize}
\end{thm}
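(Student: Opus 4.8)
The plan is to follow the strategy of Cabré–Martel \cite{CabreMartel} (extended to Kolmogorov operators in \cite{GGR}), adapting it to the weighted setting with the invariant measure $d\mu$. The key bridge between the two statements is that the condition $\lambda_1(L+V)>-\infty$ is exactly the statement that a Hardy-type inequality $\int_{\R^N}V\varphi^2\,d\mu\le\int_{\R^N}|\nabla\varphi|^2\,d\mu+K\int_{\R^N}\varphi^2\,d\mu$ holds for all $\varphi\in H^1_\mu$, with $K=-\lambda_1(L+V)$; equivalently, the bilinear form $a_\mu(u,v)+K\int uv\,d\mu-\int Vuv\,d\mu$ is nonnegative on $H^1_\mu$.

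\smallskip
\noindent\textbf{Proof of (i).} Assume $\lambda_1:=\lambda_1(L+V)>-\infty$. First I would truncate the potential: set $V_k=\min\{V,k\}$, which is bounded and nonnegative, so $L+V_k$ generates a positive $C_0$-semigroup on $L^2_\mu$ by bounded perturbation of the analytic semigroup generated by $L$ (Proposition in Section 2). Since $\lambda_1(L+V_k)\ge\lambda_1(L+V)\ge\lambda_1$ uniformly in $k$, the form estimate gives $\|e^{t(L+V_k)}\|_{L^2_\mu\to L^2_\mu}\le e^{-\lambda_1 t}$ for all $k$, hence the solutions $u_k(t)=e^{t(L+V_k)}u_0$ satisfy \eqref{eq 1} with $M=1$, $\omega=-\lambda_1$, uniformly in $k$. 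Next, monotonicity: since $V_k\uparrow V$ and the semigroups are positivity preserving, $0\le u_k\le u_{k+1}$ (monotone dependence of the semigroup on the potential, proved via the Trotter product formula or the resolvent identity), so $u(t):=\lim_k u_k(t)$ exists pointwise and in $L^2_\mu$ by the uniform bound and monotone convergence; $u$ inherits \eqref{eq 1}. Then one checks $u$ is a weak solution: writing the weak formulation for each $u_k$ against a test function $\phi\in W_2^{2,1}$ with compact support and $\phi(\cdot,T)=0$, one passes to the limit in each term — the left-hand side by $L^2_\mu$-convergence, and the right-hand side $\int_0^T\!\int V_k u_k\phi\,d\mu\,dt\to\int_0^T\!\int Vu\phi\,d\mu\,dt$ by monotone convergence, which simultaneously shows $Vu\in L^1_{loc}$. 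Positivity of $u$ follows because $u\ge u_1=e^{t(L+V_1)}u_0>0$ for $t>0$ when $u_0\gneq0$ (irreducibility of the Ornstein–Uhlenbeck semigroup). Continuity $u\in C([0,\infty),L^2_\mu)$ follows from the uniform convergence on compact time intervals, which itself follows from the uniform exponential bound plus monotonicity (Dini-type argument).

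\smallskip
\noindent\textbf{Proof of (ii).} Assume $\lambda_1(L+V)=-\infty$ and, for contradiction, suppose $u$ is a positive weak solution of $(P)$ with some $u_0\gneq0$ satisfying \eqref{eq 1}. I would compare $u$ with the truncated solutions: a parabolic comparison/maximum-principle argument shows $u(t)\ge u_k(t)=e^{t(L+V_k)}u_0$ for every $k$ (the weak solution dominates the solution with the smaller potential $V_k$). Fix $t_0>0$. From \eqref{eq 1}, $\|u_k(t_0)\|_{L^2_\mu}\le\|u(t_0)\|_{L^2_\mu}\le Me^{\omega t_0}\|u_0\|_{L^2_\mu}$, so the operator norm of $e^{t_0(L+V_k)}$ on the positive cone — hence on all of $L^2_\mu$, by positivity — is bounded uniformly in $k$ by some constant $C(t_0)$. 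Spectrally, $\|e^{t_0(L+V_k)}\|\ge e^{-t_0\lambda_1(L+V_k)}$, so $\lambda_1(L+V_k)\ge -\frac1{t_0}\log C(t_0)$ uniformly in $k$. Letting $k\to\infty$ and using $\lambda_1(L+V_k)\downarrow\lambda_1(L+V)$ (monotone convergence of the bottom of the spectrum under $V_k\uparrow V$, which holds by the variational formula and monotone convergence in the Rayleigh quotient), we get $\lambda_1(L+V)\ge -\frac1{t_0}\log C(t_0)>-\infty$, contradicting the hypothesis. Hence no such solution exists.

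\smallskip
\noindent The main obstacle is the justification of the comparison inequality $u\ge u_k$ for a merely \emph{weak} solution $u$ with only $Vu\in L^1_{loc}$ — one cannot directly plug $u$ into a semigroup formula. The standard device (as in \cite{CabreMartel,GGR}) is to first regularize: mollify/truncate $u$ in space, test the weak formulation against suitable $\phi$ built from the semigroup $e^{s(L+V_k)}$ applied to test functions, integrate by parts using Proposition (iii), and use Gronwall to deduce $u(t)\ge u_k(t)$; the weighted measure enters only through the integration-by-parts identity $\int\nabla u\cdot\nabla v\,d\mu=-\int(Lu)v\,d\mu$, which already incorporates the drift, so no new difficulty arises beyond the $\mu=1$ case. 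I would cite \cite{CabreMartel} and \cite{GGR} for the technical core of this comparison step and present the adaptation to $d\mu$ in detail only where the drift term or the non-$C_0$ nature of the original semigroup on $C_b$ could cause concern.
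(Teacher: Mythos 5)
Your overall strategy is the same one the paper relies on: the paper does not prove this theorem itself but defers entirely to \cite{GGR}, which implements Cabr\'e--Martel's truncation scheme ($V_k=\min\{V,k\}$, uniform quasi-contractivity from the form bound, monotone passage to the limit in the weak formulation for (i); comparison with the truncated semigroups for (ii)). Your sketch of (i) is sound, and you correctly isolate the comparison inequality $u(t)\ge e^{t(L+V_k)}u_0$ as the technical core to be imported from \cite{CabreMartel,GGR}.

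There is, however, one concrete gap in your part (ii). From \eqref{eq 1} and the comparison you only obtain $\sup_k\|e^{t_0(L+V_k)}u_0\|_{L^2_\mu}<\infty$ for the \emph{single} initial datum $u_0$; your parenthetical ``on the positive cone --- hence on all of $L^2_\mu$, by positivity'' does not follow, since a bound on one positive orbit does not bound the operator norm of a positive operator. The missing ingredient is exactly the property the paper flags as ``required'': the strict positivity of $T(t)u_0$ on compact sets for $0\le u_0\in L^2_\mu\setminus\{0\}$. One uses it to dominate an arbitrary $0\le w\in L^\infty$ with compact support by a constant multiple of $T(1)u_0\le e^{(L+V_k)}u_0$, whence $\|e^{t(L+V_k)}w\|_{L^2_\mu}\le C_w\,\|e^{(t+1)(L+V_k)}u_0\|_{L^2_\mu}\le C_w\,Me^{\omega(t+1)}\|u_0\|_{L^2_\mu}$ uniformly in $k$; testing the quadratic form of $-(L+V_k)$ on such $w$ (or equivalently bounding $\langle e^{t(L+V_k)}w,w\rangle_{L^2_\mu}\ge e^{-t\lambda_1(L+V_k)\,}\|P_kw\|^2$ via the spectral theorem, with density of such $w$) then yields the uniform lower bound on $\lambda_1(L+V_k)$ and hence the contradiction. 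With this step restored, your argument matches the proof in \cite{GGR} that the paper invokes.
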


The proof of the Theorem is based on Cabr\'e-Martel's idea in \cite{CabreMartel} 
and it was proved in \cite {GGR} for
functions $\mu$ belonging to $C_{loc}^{1,\alpha}(\R^N)$. The proof 
relies on certain properties of the operator $L$ 
and its corresponding semigroup $T(t)$ in $L^2_\mu$.
Furthermore the strict positivity on compact sets of $T(t)u_0$, if $0\leq u_0\in L^2_\mu
\setminus \{0\}$ is required.

\bigskip\bigskip

\section{Weighted Hardy inequality and optimality of the constant}
\bigskip

Let us consider the following Gaussian measure
\begin{equation}\label{mu}
d\mu =\mu(x)dx= C\, e^{-\frac{1}{2}\sum_{i=1}^{n}\langle A(x-a_i),x-a_i\rangle}\, dx
\end{equation}
with
\begin{equation}\label{C}
C=\left(\int_{\R^N} e^{-\frac{1}{2}\sum_{i=1}^{n}\langle A(x-a_i),x-a_i\rangle}\, dx \right)^{-1}
\end{equation}
and $A$  positive definite real Hermitian $N\times N$-matrix,
which is the unique invariant probability measure for Ornstein-Uhlenbeck type operators
$$
 Lu=\Delta u -\sum_{i=1}^n A(x-a_i)\cdot \nabla u. 
$$
So the operator $L$, with domain 
$H^2_\mu:=\{u\in H^1_\mu: D_ku\in H^1_\mu\rbrace$, 
 generates an analytic semigroup  $\lbrace T(t)\rbrace_{t\ge 0}$ on $L^2_\mu$ (cf \cite{MPRS}).

The operators we consider are perturbed by the multipolar inverse square potential   
\begin{equation}\label{Vn}
V(x)=\sum_{i=1}^n \frac{c}{|x-a_i|^2}=c\,V_n,
\end{equation}
where $x \in \R^N$, $c> 0$, $a_i\in \R^N$, $ i=1,\dots,n$.

We state the following weighted Hardy inequality.

\begin{thm}\label{wH}
Assume $N\ge 3$, $n\ge 2$ and $A$ a positive definite real Hermitian $N\times N$-matrix. 
Let $r_0=\min_{i\neq j} |a_i-a_j|/2$, $i,j=1,\dots,n$ and $k\in [0, \pi^2)$. Then we get

\begin{equation}\label{gaussian hardy}
\begin{split}
c\int_{{\R}^N}\sum_{i=1}^n\frac{\varphi^2 }{|x-a_i|^2}\, d\mu&\le  
\int_{{\R}^N} |\nabla\varphi|^2 \, d\mu 
\\&+
 \left[\frac{k+(n+1)c}{ r_0^2}+\frac{n}{2}\Tr A\right] \int_{\R^N}\varphi^2 \, d\mu 
\end{split}
\end{equation}
for all $\varphi \in H^1_\mu$, where $c\in (0, c_o]$ with $c_o=c_o(N):=\left(\frac{N-2}{2}\right)^2$ 
optimal constant.
\end{thm}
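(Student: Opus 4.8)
The plan is to reduce the weighted inequality \eqref{gaussian hardy} to the known unweighted multipolar Hardy inequality from \cite{BDE} by absorbing the Gaussian density into the test function. Concretely, for $\varphi\in C_c^\infty(\R^N)$ (which is dense in $H^1_\mu$) I would write $\varphi = \mu^{-1/2}\psi$, so that $\psi = \mu^{1/2}\varphi\in C_c^\infty(\R^N)$ as well. Then
\begin{equation*}
\int_{\R^N}|\nabla\varphi|^2\,d\mu = \int_{\R^N}\left|\nabla\psi - \tfrac12\psi\,\tfrac{\nabla\mu}{\mu}\right|^2 dx = \int_{\R^N}|\nabla\psi|^2\,dx + \int_{\R^N}\left(\tfrac14\left|\tfrac{\nabla\mu}{\mu}\right|^2 - \tfrac12\,\tfrac{\Delta\mu}{\mu}\right)\psi^2\,dx,
\end{equation*}
after integrating by parts the cross term $-\int \psi\nabla\psi\cdot\frac{\nabla\mu}{\mu}\,dx = \frac12\int\psi^2\,\mathrm{div}\!\left(\frac{\nabla\mu}{\mu}\right)dx$. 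Meanwhile the potential term is simply $c\int\sum_i\frac{\varphi^2}{|x-a_i|^2}\,d\mu = c\int\sum_i\frac{\psi^2}{|x-a_i|^2}\,dx$. So \eqref{gaussian hardy} becomes an \emph{unweighted} inequality for $\psi$ with an extra potential term coming from the ``ground-state'' expression $W:=\frac14|\nabla\mu/\mu|^2 - \frac12\Delta\mu/\mu$.

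The second step is to estimate $W$ explicitly for the Gaussian $\mu$ in \eqref{mu}. Since $\frac{\nabla\mu}{\mu} = -\sum_{i=1}^n A(x-a_i)$, one computes $\frac{\Delta\mu}{\mu} = \left|\sum_i A(x-a_i)\right|^2 - n\,\Tr A$, hence
\begin{equation*}
W = \tfrac14\Big|\sum_i A(x-a_i)\Big|^2 - \tfrac12\Big|\sum_i A(x-a_i)\Big|^2 + \tfrac{n}{2}\Tr A = -\tfrac14\Big|\sum_i A(x-a_i)\Big|^2 + \tfrac{n}{2}\Tr A \le \tfrac{n}{2}\Tr A.
\end{equation*}
This immediately gives $\int|\nabla\varphi|^2\,d\mu \ge \int|\nabla\psi|^2\,dx - \frac{n}{2}\Tr A\int\psi^2\,dx$; equivalently, dropping the nonpositive quadratic term costs only the constant $\frac{n}{2}\Tr A$, which is exactly the $\frac{n}{2}\Tr A$ appearing in \eqref{gaussian hardy}.

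The third step is to invoke the result of \cite{BDE}: for $c\in(0,c_o(N)]$ there is a constant $K_0$ (depending on $N$, $c$, $n$, and the pole configuration, and in the form displayed one can take $K_0 = \frac{k+(n+1)c}{r_0^2}$ with $r_0=\frac12\min_{i\neq j}|a_i-a_j|$ and $k\in[0,\pi^2)$) such that
\begin{equation*}
c\int_{\R^N}\sum_{i=1}^n\frac{\psi^2}{|x-a_i|^2}\,dx \le \int_{\R^N}|\nabla\psi|^2\,dx + K_0\int_{\R^N}\psi^2\,dx
\end{equation*}
for all $\psi\in C_c^\infty(\R^N)$. Combining this with the bound on $W$ and translating back via $\int\psi^2\,dx = \int\varphi^2\,d\mu$ yields \eqref{gaussian hardy} with the stated constant $\frac{k+(n+1)c}{r_0^2} + \frac{n}{2}\Tr A$; a density argument extends it from $C_c^\infty(\R^N)$ to all of $H^1_\mu$. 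Finally, optimality of $c_o(N)$: I would argue that if \eqref{gaussian hardy} held for some $c>c_o(N)$, then localizing $\varphi$ near a single pole $a_i$ (say $a_1$) using cutoffs supported in $B(a_1,r_0)$ and undoing the substitution would produce, in the limit, a violation of the classical single-pole Hardy inequality $\big(\frac{N-2}{2}\big)^2\int\frac{\psi^2}{|x|^2}\,dx\le\int|\nabla\psi|^2\,dx$ on $\R^N$, whose constant is known to be optimal; the Gaussian weight is locally bounded above and below on the ball, so it does not affect the blow-up analysis of the Rayleigh quotient.

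The routine parts are the integration by parts and the derivative computations for the Gaussian; the one place needing care is the density/approximation argument ensuring $\psi=\mu^{1/2}\varphi$ stays in the right space and that passing to $H^1_\mu$ is legitimate, together with checking the precise form of the constant $K_0$ coming out of \cite{BDE} (in particular that the partition-of-unity argument there indeed yields $\frac{k+(n+1)c}{r_0^2}$ with $k\in[0,\pi^2)$). The main conceptual obstacle is really just keeping the bookkeeping of constants consistent between the two formulations; the analytic heart — that the Gaussian drift contributes a \emph{good} (nonpositive, modulo a bounded constant) term to the quadratic form — is the clean observation that makes the whole reduction work.
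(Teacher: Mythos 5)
Your proof of the inequality itself is essentially the paper's own argument: the paper likewise applies the Bosi--Dolbeault--Esteban inequality to $\psi=\varphi\sqrt{\mu}$ and bounds the ground-state potential $W=\frac14\bigl|\frac{\nabla\mu}{\mu}\bigr|^2-\frac12\frac{\Delta\mu}{\mu}$ by $\frac n2\Tr A$ exactly as you do. One small slip: after integrating by parts, the identity should read $\int|\nabla\varphi|^2\,d\mu=\int|\nabla\psi|^2\,dx-\int W\psi^2\,dx$ (equivalently $\int|\nabla\psi|^2\,dx=\int|\nabla\varphi|^2\,d\mu+\int W\varphi^2\,d\mu$, which is the form the paper uses); your displayed formula has $+\int W\psi^2\,dx$, although the inequality you then draw from it, $\int|\nabla\varphi|^2\,d\mu\ge\int|\nabla\psi|^2\,dx-\frac n2\Tr A\int\psi^2\,dx$, is the correct one and is what the proof needs.

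Where you genuinely diverge is the optimality of $c_o$. The paper does not localize: it tests the Rayleigh quotient directly with the global functions $\varphi=|x-a_i|^\gamma$, $\gamma\in(1-\frac N2,0)$, shows these lie in $H^1_\mu$ thanks to the Gaussian decay, computes both integrals explicitly via Gamma functions after squeezing the multipolar Gaussian weight between two single-pole Gaussians, and lets $\gamma\to(1-\frac N2)^+$ so that $\lambda_1\to-\infty$. Your route--concentrating compactly supported functions at one pole and invoking optimality of the classical Hardy constant--is also viable and arguably more robust (it uses only local information about $\mu$), but as written it has a gap: on a \emph{fixed} ball $B(a_1,\delta)$ the bound $m\le\mu\le M$ converts the weighted inequality into an unweighted one with constant $\frac{m}{M}c$, and $\frac{m}{M}<1$, so "locally bounded above and below" alone does not produce a contradiction for every $c>c_o$. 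You must let the supports shrink to $a_1$ (so that $\sup\mu/\inf\mu\to1$ by continuity of $\mu$) while simultaneously using that the classical constant $c_o$ remains optimal for functions supported in arbitrarily small balls and in the presence of an arbitrary $L^2$ remainder term; both facts are standard (scaling), but they need to be stated, since they are exactly what your phrase "in the limit" is carrying. The paper's explicit computation avoids this issue entirely at the cost of the Gamma-function bookkeeping in \eqref{stima a destra n2}--\eqref{stima a sinistra}.
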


\bigskip

\begin{proof}
$$
$$

\noindent {\sl Step 1}$\;$({\it Inequality})$\quad$
\medskip

By density we can consider functions $\varphi\in C_c^{\infty}(\R^N)$.

The starting point is the following inequality, stated by Bosi, Dolbeault and Esteban in \cite[Theorem 1]{BDE} :

\begin{equation}\label{Esteban}
\begin{split}
c\int_{{\R}^N}\sum_{i=1}^n\frac{\varphi^2 }{|x-a_i|^2}\, dx\le  
\int_{{\R}^N} |\nabla\varphi|^2 \, dx +
 \left[\frac{k+(n+1)c}{ r_0^2}\right] \int_{\R^N}\varphi^2 \, dx 
\end{split}
\end{equation}
for all $\varphi \in H^1(\R^N)$, with $n\ge 2$, $k\in \left[ 0, \pi^2 \right)$ 
and $c\in (0,c_o]$.
The proof of  (\ref{Esteban}) is based on IMS truncation method.
In the Section \ref{via IMS} we will prove the weighted version of the inequality  (\ref{Esteban})  
reasoning as in \cite[Theorem 1]{BDE} .

Now we state the weighted version of this result in a direct way. 

Indeed, applying (\ref{Esteban})  to the function  $\varphi \sqrt{\mu}$, we have

\begin{equation*}
c\int_{{\R}^N}\sum_{i=1}^n\frac{\varphi^2 }{|x-a_i|^2}\, d\mu \le
\int_{{\R}^N} |\nabla\left(\varphi \sqrt{\mu} \right)|^2 \, dx +
\left[\frac{k+(n+1)c}{ r_0^2}\right]\int_{{\R}^N} \varphi^2 \, d\mu.
\end{equation*}
By means the easy calculation

\begin{equation*}
\begin{split}
 \int_{{\R}^N} |\nabla\left(\varphi \sqrt{\mu} \right)|^2 \, dx &=
\int_{{\R}^N} \left| (\nabla\varphi)\sqrt{\mu}+\varphi \frac{\nabla\mu}{2\sqrt{\mu}}\right|^2 \, d\mu\\
&=
 \int_{{\R}^N} |\nabla\varphi|^2 \, d\mu+ \int_{{\R}^N} \left[\frac{1}{4}
\left|\frac{\nabla\mu}{\mu} \right|^2-\frac{1}{2}\frac{\Delta\mu}{\mu} \right]\varphi^2 \, d\mu .
\end{split}
\end{equation*}
and observing that we can estimate the last integral above taking into account that

\begin{equation}\label{bound Tr}
\begin{split}
\frac{1}{4}\left| \frac{\nabla\mu}{\mu} \right|^2 -\frac{1}{2}\frac{\Delta\mu}{\mu} &=
\frac{1}{4}\left|\sum_{j=1}^{n}A(x-a_j) \right|^2+
\\&
-\frac{1}{2}\left[ -n \Tr A+\left|\sum_{j=1}^{n}A(x-a_j) \right|^2\right] 
\leq \frac{n}{2}\Tr A
\end{split}
\end{equation}
we get the result.

\bigskip

\noindent {\sl Step 2}$\;$({\it Optimality})$\quad$
\medskip

To state the optimality of the constant $c_o$ we suppose that
$c>c_o$.

Let us fix $i$ and consider the function $\varphi=|x-a_i|^\gamma$,  $\gamma\in (1-\frac{N}{2},0)$.
The function  $\varphi$ belongs to $H^1_\mu$ and 

\begin{equation*}
\int_{{\mathbb R}^N}\left(|\nabla \varphi |^2-c\,\frac{\varphi^2}{|x-a_i|^2}\right)\,d\mu
=(\gamma^2-c)\int_{{\mathbb R}^N}|x-a_i|^{2(\gamma -1)}\,d\mu.
\end{equation*}
Hence the bottom of the spectrum $\lambda_1$ of the operator $-(L+V)$ satisfies
\begin{equation}\label{lambda 1}
\lambda_1\le (\gamma^2-c)
\frac{\int_{{\mathbb R}^N}|x-a_i|^{2(\gamma -1)}\,d\mu}
{\int_{{\mathbb R}^N}|x-a_i|^{2\gamma}\,d\mu}
\end{equation}
since 
\begin{equation*}
\int_{{\R}^N}\left(|\nabla \varphi |^2-V\varphi^2\right)\,d\mu \le
 \int_{{\mathbb R}^N}\left(|\nabla \varphi |^2-c\,\frac{\varphi^2}{|x-a_i|^2}\right)\,d\mu.
\end{equation*}
We are able to state that for any $i\in \left\lbrace 1,\dots,n \right\rbrace $ it holds

\begin{equation}\label{pesi equivalenti}
C_1\,e^{-\alpha_2(2n-1)\frac{|x-a_i|^2}{2}}\le e^{-\sum_{i=1}^n\frac{|A^{\frac{1}{2}}(x-a_i)|^2}{2}}\le 
C_2\, e^{-\alpha_1 \frac{n+1}{2}\frac{|x-a_i|^2}{2}}
\end{equation}
\medskip
\noindent with $C_1=e^{-\alpha_2\sum_{i\ne j}|a_i-a_j|^2 } $ and $ C_2=
e^{{\frac{\alpha_1}{2}}\sum_{i\ne j}|a_i-a_j|^2}$
which is a consequence of the inequalities
$$\alpha_1 \sum_{i=1}^n|x-a_i|^2\le \sum_{i=1}^n|A^{\frac{1}{2}}(x-a_i)|^2\le \alpha_2\sum_{i=1}^n|x-a_i|^2,
\qquad \alpha_1\, ,\alpha_2>0,$$
and

\begin{equation}\label{equivalenza moduli}
\begin{split}
-\sum_{j\ne i}|a_i-a_j|^2+\frac{n+1}{2}|x-a_i|^2\le & \sum_{i=1}^n|x-a_i|^2
\\ &
\le(2n-1)|x-a_i|^2+2\sum_{j\ne i}|a_i-a_j|^2.
\end{split}
\end{equation}
The inequality (\ref{equivalenza moduli}) is proved in Appendix.

For simplicity in the following we place $\tilde \alpha_1=\alpha_1 \frac{n+1}{2}$ and 
$\tilde \alpha_2=\alpha_2(2n-1)$.

The equivalence between the weight functions in the case of one pole and in the case of multiple poles
allows us to calculate integrals in (\ref{lambda 1}). Indeed, by a change of variables and by
(\ref{pesi equivalenti})

\begin{equation}\label{stima a destra}
\begin{split}
\int_{\R^N}|x-a_i|^{2\beta}e^{-\sum_{i=1}^n\frac{|A^{\frac{1}{2}}(x-a_i)|^2}{2}}\,dx &\le 
C_2 \int_{\R^N}|x-a_i|^{2\beta}e^{-\tilde\alpha_1\frac{|x-a_i|^2}{2}}\,dx
\\&=
C_2\, 2^{\beta+\frac{N}{2}}\tilde\alpha_1^{-\beta-\frac{N}{2}}\int_{\R^N}|x-a_i|^{2\beta}e^{-\frac{|x-a_i|^2}{2}}\,dx.
\end{split}
\end{equation}
Taking in mind the definition of Gamma integral function 

$$\int_{\R^N}|x|^{2\beta}e^{-\frac{|x|^2}{2}}\,dx=
\sigma_N\, 2^{\beta+\frac{N}{2}-1}\Gamma\left(\beta+\frac{N}{2}\right),
\quad \beta+\frac{N}{2}>0,$$
we get from (\ref{stima a destra})

\begin{equation}\label{stima a destra n2}
\int_{\R^N}|x-a_i|^{2\beta}e^{-\sum_{i=1}^n\frac{|A^{\frac{1}{2}}(x-a_i)|^2}{2}}\,dx\le 
C_2\, 2^{2\beta+N-1} \tilde\alpha_1^{-\beta-\frac{N}{2}}\sigma_N \Gamma\left(\beta+\frac{N}{2}\right).
\end{equation}
Reasoning as above we obtain an estimate from below

\begin{equation}\label{stima a sinistra}
\begin{split}
\int_{\R^N}|x-a_i|^{2\beta}e^{-\sum_{i=1}^n\frac{|A^{\frac{1}{2}}(x-a_i)|^2}{2}}\,dx &
 \ge C_1 \int_{\R^N}|x-a_i|^{2\beta}e^{-\tilde\alpha_2\frac{|x-a_i|^2}{2}}\,dx
\\&=
C_1\tilde\alpha_2^{-\beta-\frac{N}{2}}\int_{\R^N}|x-a_i|^{2\beta}e^{-\frac{|x-a_i|^2}{2}}\,dx 
\\&=
C_1\, 2^{\beta+\frac{N}{2}-1} \tilde\alpha_2^{-\beta-\frac{N}{2}}\sigma_N \Gamma\left(\beta+\frac{N}{2}\right).
\end{split}
\end{equation}
Therefore, using (\ref{stima a destra n2}) and (\ref{stima a sinistra}), we get

\begin{equation*}
\begin{split}
\frac{\int_{\R^N}|x-a_i|^{2(\gamma-1)}\,d\mu}{\int_{\R^N}|x-a_i|^{2\gamma}\,d\mu}&\ge  
\frac{C_1\, 2^{\gamma+\frac{N}{2}-2} \tilde\alpha_2^{-\gamma-\frac{N}{2}+1}\sigma_N \Gamma(\gamma+
\frac{N}{2}-1)}{C_2\, 2^{2\gamma+N-1} \tilde\alpha_1^{-\gamma-\frac{N}{2}}\sigma_N \Gamma(\gamma+\frac{N}{2})}
\\ &=
\frac{C_1\, 2^{\gamma+\frac{N}{2}-2} \tilde\alpha_2^{-\gamma-\frac{N}{2}+1}}
{C_2\, 2^{2\gamma+N-1} \tilde\alpha_1^{-\gamma-\frac{N}{2}}(\gamma+\frac{N}{2}-1)}\,.
\end{split}
\end{equation*}
Then
$$\lambda_1\le 
\lim_{\gamma \rightarrow \left( 1- \frac{N}{2} \right)^+ } (\gamma^2 -c)
\frac{C_1\, 2^{\gamma+\frac{N}{2}-2} \tilde\alpha_2^{-\gamma-\frac{N}{2}+1}}
{C_2\, 2^{2\gamma+N-1} \tilde\alpha_1^{-\gamma-\frac{N}{2}}(\gamma+\frac{N}{2}-1)}=-\infty.$$
Thus, for any $M>0$, there is $\varphi \in H^1_\mu$ such that
$$\int_{{\mathbb R}^N} |\nabla \varphi|^2\,d\mu -c
\int_{{\mathbb R}^N}\frac{\varphi^2}{|x-a_i|^2}\,d\mu <
-M\int_{{\mathbb R}^N}\varphi^2\,d\mu.$$
By taking $M:=\frac{k+(n+1)c}{ r_0^2}+\frac{n}{2}\Tr A$ we find
$\varphi \in H^1_\mu$ such that
$$c\int_{{\mathbb R}^N}\frac{\varphi^2}{|x-a_i|^2}\,d\mu >
\int_{{\mathbb R}^N} |\nabla
\varphi|^2\,d\mu+\left[\frac{k+(n+1)c}{ r_0^2}+\frac{n}{2}\Tr A \right]
\int_{{\mathbb R}^N}\varphi^2\,d\mu$$ 
 which leads to a contradiction with respect the weighted Hardy inequality 
(\ref{gaussian hardy}) because, of course,
\begin{equation*}
c\int_{{\mathbb R}^N}\frac{\varphi^2}{|x-a_i|^2}\,d\mu \le
c \int_{{\mathbb R}^N}\sum_{i=1}^n\frac{\varphi^2}{|x-a_i|^2}\,d\mu.
\end{equation*}
This proves the optimality of $c_o$.
\end{proof}
We remark that when $c\in (0,\frac{c_o}{n}]$ the constant on the right-hand side of (\ref{gaussian hardy})
can be improved using a different proof based on the multipolar Hardy inequality in the case of Lebesgue measure. 
Moreover the inequality (\ref{c_o su n}) below holds also in the case $n=1$.

\begin{thm}\label{improved constant}
Assume $N\ge 3$ and $n\ge 1$. Then we get

\begin{equation}\label{c_o su n}
\frac{c_o}{n}\int_{{\R}^N}\sum_{i=1}^n\frac{\varphi^2 }{|x-a_i|^2}\, d\mu\le  
\int_{{\R}^N} |\nabla\varphi|^2 \, d\mu +
\frac{n}{2}\Tr A \int_{\R^N}\varphi^2 \, d\mu 
\end{equation}
for any $\varphi \in H^1_\mu(\R^N)$, where  $c_o=c_o(N):=\left(\frac{N-2}{2}\right)^2$.
\end{thm}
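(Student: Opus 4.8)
The plan is to reduce everything to the Lebesgue measure by the substitution $\varphi\mapsto\varphi\sqrt{\mu}$, exactly as in Step~1 of the proof of Theorem~\ref{wH}, but now starting from the elementary $K=0$ multipolar Hardy inequality with constant $c_o/n$ instead of from (\ref{Esteban}). By density it suffices to establish (\ref{c_o su n}) for $\varphi\in C_c^\infty(\R^N)$.

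First I would recall the classical single-pole Hardy inequality, valid for every $a\in\R^N$ since $N\ge3$:
\[
c_o\int_{\R^N}\frac{\psi^2}{|x-a|^2}\,dx\le\int_{\R^N}|\nabla\psi|^2\,dx,\qquad \psi\in H^1(\R^N).
\]
Applying this with $a=a_i$ for $i=1,\dots,n$ and summing the $n$ resulting inequalities against the \emph{same} $\psi$ gives
\[
c_o\sum_{i=1}^n\int_{\R^N}\frac{\psi^2}{|x-a_i|^2}\,dx\le n\int_{\R^N}|\nabla\psi|^2\,dx,
\]
i.e.\ the multipolar Hardy inequality with $K=0$ and constant $c_o/n$. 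Next I would plug in $\psi=\varphi\sqrt{\mu}$, which again lies in $C_c^\infty(\R^N)\subset H^1(\R^N)$ because $\mu$ is smooth and strictly positive. The left-hand side turns into $\tfrac{c_o}{n}\sum_i\int_{\R^N}\varphi^2|x-a_i|^{-2}\,d\mu$, while for the right-hand side the very same computation used in the proof of Theorem~\ref{wH} yields
\[
\int_{\R^N}|\nabla(\varphi\sqrt{\mu})|^2\,dx=\int_{\R^N}|\nabla\varphi|^2\,d\mu+\int_{\R^N}\left[\frac14\left|\frac{\nabla\mu}{\mu}\right|^2-\frac12\frac{\Delta\mu}{\mu}\right]\varphi^2\,d\mu.
\]
Finally, invoking the pointwise bound (\ref{bound Tr}), namely $\tfrac14|\nabla\mu/\mu|^2-\tfrac12\Delta\mu/\mu\le\tfrac n2\Tr A$, controls the last integral by $\tfrac n2\Tr A\int_{\R^N}\varphi^2\,d\mu$, which is precisely (\ref{c_o su n}); a density argument then extends it to all $\varphi\in H^1_\mu(\R^N)$. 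Note this argument makes sense already for $n=1$, so the stated range $n\ge1$ is covered.

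I do not expect a genuine obstacle here: the only points deserving care are the legitimacy of the substitution $\psi=\varphi\sqrt{\mu}$ (immediate from smoothness and positivity of $\mu$ together with the compact support of $\varphi$) and the reuse, verbatim, of the expansion of $|\nabla(\varphi\sqrt{\mu})|^2$ and of the bound (\ref{bound Tr}) from Theorem~\ref{wH}. The gain over (\ref{gaussian hardy}) is conceptual rather than technical: distributing the sharp one-pole constant $c_o$ evenly over the $n$ poles makes all the $r_0^{-2}$ error terms vanish, at the cost of lowering the admissible potential constant from $c_o$ to $c_o/n$, which is why (\ref{c_o su n}) is the sharper statement only in the regime $c\le c_o/n$.
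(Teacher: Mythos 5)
Your proof is correct and takes essentially the same route as the paper: the paper likewise obtains the $K=0$ multipolar inequality (\ref{lebesgue pole}) ``immediately by using the Hardy inequality with one pole,'' then applies it to $\varphi\sqrt{\mu}$ and reuses the expansion of $|\nabla(\varphi\sqrt{\mu})|^2$ together with the bound (\ref{bound Tr}) from the proof of Theorem \ref{wH}. You have merely spelled out the details that the paper leaves implicit.
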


\begin{proof}
We start from the known inequality

\begin{equation}\label{lebesgue pole}
\frac{c_o}{n}\int_{\R^N}\sum_{i=1}^n\frac{\varphi^2 }{|x-a_i|^2}\, dx \le  
\int_{{\R}^N} |\nabla\varphi|^2 \, dx
\end{equation}
for all $\varphi \in H^1(\R^N)$, where $c_o=c_o(N):=\left(\frac{N-2}{2}\right)^2$,
which we can get  immediately by using the Hardy inequality with one pole.

% Riferimento da inserire poi

Then we apply the inequality (\ref{lebesgue pole})
to the function  $\varphi \sqrt{\mu}$ and reason as in the proof of Theorem \ref{wH}.
\end{proof}

\bigskip\bigskip

\section{Proof of the weighted Hardy inequality via the IMS method}\label{via IMS}

We can prove the inequality in Theorem \ref{wH} using the so-called 
IMS method, which consists 
in localizing the wave functions around the singularities by using a partition of unity.

We say that a finite family
$\left\lbrace J_i \right\rbrace_{i=1}^{n+1} $ 
of real valued functions
$J_i\in W^{1,\infty}(\R^N)$ 
is a \textit{partition of unity} in $\R^N$ if $\sum_{i=1}^{n+1}J_i^2=1$.\\ 
Any family of this type has the following properties:
\begin{enumerate}
\item[(a)] $\sum_{i=1}^{n+1}J_i\partial_\alpha J_i=0$ for any $\alpha= 1,\dots,N $;
\item[(b)] $J_{n+1}=\sqrt{1-\sum_{i=1}^{n}J_i^2}$;
\item[(c)] $\sum_{i=1}^{n+1}|\nabla J_i|^2\in L^{\infty}(\R^N)$.
\end{enumerate}
Furthermore we require that
\begin{equation}\label{omega}
\Omega_i\cap\Omega_j=\emptyset \quad\text{for any } i,j= 1,\dots,n ,\, i\neq j,
\end{equation}
where $\overline{\Omega}_i={\rm supp}(J_i)$, $ i=  1,\dots,n $. By the property (a) we get
$$\sum_{\alpha =1}^{N}|J_{n+1}\partial_\alpha J_{n+1}|^2=\sum_{\alpha =1}^{N}\left|
\sum_{j =1}^{n}J_j\partial_\alpha J_j \right|^2 =\sum_{\alpha =1}^{N}\sum_{j =1}^{n}|J_j\partial_\alpha J_j|^2 ,$$
from which 
$$|\nabla J_{n+1}|^2=\sum_{i=1}^{n}\frac{J_i^2}{1-J_i^2}|\nabla J_i|^2.$$ 
As a consequence we obtain an explicit formula for the sum of the gradients:
\begin{enumerate}
\item[(d)] $\sum_{i=1}^{n+1}|\nabla J_i|^2= \sum_{i=1}^{n}|\nabla J_i|^2+\sum_{i=1}^{n}\frac{J_i^2}{1-J_i^2}|
\nabla J_i|^2=\sum_{i=1}^{n}\frac{|\nabla J_i|^2}{1-J_i^2}$,
\end{enumerate}
Note that to avoid a singularity for the gradient of $J_{n+1}$ at the points where $1-J_i^2=0$, from (d) we shall assume
the additional constraint $|\nabla J_i|^2=F(x)(1-J_i^2)$, for $i= 1,\dots,n $ and for some $F\in L^{\infty}(\R^N)$.

By proceeding as in \cite[Lemma 2]{BDE}, 
%originally established by B. Simon in \cite{Simon}, 
we are able to state the following result.
\begin{lemma}\label{lemma2BDE}
Let $\left\lbrace J_i \right\rbrace_{i=1}^{n+1} $ be a partition of unity satisfying (\ref{omega}), and $d\mu$ the Gaussian 
measure defined in (\ref{mu}) . For any $u\in H^1_\mu$ and any $V\in L^1_{loc}(\R^N)$ we get
\begin{equation*}
\begin{split}
\int_{\R^N}\left( |\nabla \varphi|^2 -V\varphi^2 \right)d\mu = &
 \sum_{i=1}^{n+1}\int_{\R^N}( |\nabla (J_i \varphi)|^2 - V(J_i \varphi)^2 ) d\mu\\
 & -\int_{\R^N}\sum_{i=1}^{n+1} |\nabla J_i|^2\varphi^2\,d\mu.
\end{split}
\end{equation*}

\end{lemma}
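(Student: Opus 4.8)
The plan is to derive the identity from the ordinary Leibniz rule applied to each product $J_i\varphi$ and then integrate against $d\mu$; the only point that needs attention is the integrability bookkeeping that makes the rearrangement of the (finitely many) terms legitimate.

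First I would record that for each $i=1,\dots,n+1$ one has $J_i\varphi\in H^1_\mu$ with $\nabla(J_i\varphi)=J_i\nabla\varphi+\varphi\nabla J_i$ in $(L^2_\mu)^N$: this is the product rule, valid because $\varphi\in H^1_\mu$ and $J_i\in W^{1,\infty}(\R^N)$, and it applies to $i=n+1$ as well since $J_{n+1}\in W^{1,\infty}(\R^N)$ by property (c). Alternatively, one first proves the identity for $\varphi\in C_c^\infty(\R^N)$, where the computation below is completely classical, and then passes to the limit using the density of $C_c^\infty(\R^N)$ in $H^1_\mu$ from the Proposition in Section~2 and the boundedness of $\varphi\mapsto J_i\varphi$ on $H^1_\mu$.

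Next comes the pointwise algebra. Expanding the square gives, almost everywhere,
\[
|\nabla(J_i\varphi)|^2=J_i^2|\nabla\varphi|^2+2J_i\varphi\,\nabla J_i\cdot\nabla\varphi+\varphi^2|\nabla J_i|^2 .
\]
Summing over $i=1,\dots,n+1$ and invoking the normalization $\sum_{i=1}^{n+1}J_i^2=1$ together with property (a), which yields $\sum_{i=1}^{n+1}J_i\nabla J_i=0$ a.e., the mixed term drops out and
\[
\sum_{i=1}^{n+1}|\nabla(J_i\varphi)|^2=|\nabla\varphi|^2+\varphi^2\sum_{i=1}^{n+1}|\nabla J_i|^2\quad\text{a.e.}
\]
Likewise $\sum_{i=1}^{n+1}V(J_i\varphi)^2=V\varphi^2\sum_{i=1}^{n+1}J_i^2=V\varphi^2$ a.e. Subtracting the two identities and integrating with respect to $d\mu$ gives exactly the claimed formula after moving the last term to the other side.

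The only real content is checking that each term is separately well defined so the rearrangement is valid, and I do not expect a genuine obstacle here. Indeed $|\nabla(J_i\varphi)|^2$ is $\mu$-integrable since $J_i\varphi\in H^1_\mu$; the term $\varphi^2\sum_{i=1}^{n+1}|\nabla J_i|^2$ is $\mu$-integrable because $\sum_{i=1}^{n+1}|\nabla J_i|^2\in L^\infty(\R^N)$ by property (c) and $\varphi\in L^2_\mu$; and each mixed term $J_i\varphi\,\nabla J_i\cdot\nabla\varphi$ is $\mu$-integrable since $J_i,\nabla J_i\in L^\infty$ while $\varphi,\nabla\varphi\in L^2_\mu$, so its integral genuinely vanishes after the cancellation. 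For the potential, in the case $V\ge 0$ that is used in the sequel the quantities $V(J_i\varphi)^2$ and $V\varphi^2$ are nonnegative and measurable, hence have integrals in $[0,\infty]$, and the pointwise identity $\sum_i V(J_i\varphi)^2=V\varphi^2$ forces these integrals to agree in $[0,\infty]$, so no cancellation ambiguity arises; for general $V\in L^1_{loc}(\R^N)$ one simply carries out the argument for $\varphi\in C_c^\infty(\R^N)$, where every integral is finite, and then removes this restriction by the density step above. This completes the proof.
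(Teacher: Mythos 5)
Your proof is correct and follows essentially the same route as the paper's: expand $|\nabla(J_i\varphi)|^2$ via the product rule, sum over $i$, and use $\sum_i J_i^2=1$ together with property (a) to cancel the mixed term before integrating. The extra integrability bookkeeping and the density remark you add are sound refinements of the paper's more formal computation, but they do not change the argument.
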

\begin{proof}
We can immediately observe that
\begin{equation}\label{primo pezzo}
\int_{\R^N} V \left( \sum_{i=1}^{n+1}(J_i\varphi)^2\right)\,d\mu=\int_{\R^N} V \left( \sum_{i=1}^{n+1}J_i^2\right)
\varphi^2\,d\mu=\int_{\R^N} V\varphi^2\,d\mu.
\end{equation}
On the other hand,
\begin{equation}\label{calcoloNabla}
\begin{split}
\sum_{i=1}^{n+1}|\nabla \left(J_i\varphi \right)|^2 &=\sum_{i=1}^{n+1}|(\nabla J_i)\varphi + (\nabla \varphi)J_i|^2\\
&=\sum_{i=1}^{n+1}|\nabla J_i|^2\varphi^2+\sum_{i=1}^{n+1}|\nabla \varphi|^2J_i^2+2\sum_{i=1}^{n+1}(J_i\nabla J_i) 
(\varphi \nabla \varphi) \\
&=\sum_{i=1}^{n+1}|\nabla J_i|^2\varphi^2+|\nabla \varphi|^2+\left( \sum_{i=1}^{n+1}J_i\nabla J_i \right) \nabla \varphi^2.
\end{split}
\end{equation}
By property (a) it follows that $\left( \sum_{i=1}^{n+1}J_i\nabla J_i \right) \nabla \varphi^2=0$, then by integrating 
(\ref{calcoloNabla}) on $\R^N$ we obtain
\begin{equation}\label{secondo pezzo}
\int_{\R^N}|\nabla \varphi|^2\,d\mu=\int_{\R^N}\sum_{i=1}^{n+1}|\nabla \left(J_i\varphi \right)|^2\,d\mu-\int_{\R^N} 
\sum_{i=1}^{n+1} |\nabla J_i|^2 \varphi^2\,d\mu.
\end{equation}
From (\ref{primo pezzo}) and (\ref{secondo pezzo}) we get the result.
\end{proof}

Taking in mind that 
$$V_n(x)=\sum_{i=1}^{n}\frac{1}{|x-a_i|^2},$$ as defined in (\ref{Vn}), we recall a preliminary lemma, stated by 
Bosi, Dolbeault and Esteban in \cite{BDE}, about the case $n=2$, with $a_1=a$, $a_2=-a$ and $0<r_0\le |a|$.
\begin{lemma}\label{lemma3BDE}
There is a partition of the unity $\left\lbrace J_i \right\rbrace_{i=1}^3 $ satisfying (\ref{omega}) with $J_1\equiv 1$ on 
$B(a,\frac{r_0}{2})$, $J_1\equiv 0$ on $B(a,r_0)^c$, $J_2(x)=J_1(-x)$ for any $x\in \R^N$, $0<r_0\le |a|$, such that, for any $c>0$,
 there exists a constant $k\in \left[ 0, \pi^2 \right) $ for which, almost everywhere for all $x\in \Omega:=
{\rm supp}(J_1)\cup{\rm supp}(J_2)$, we have 
\begin{equation}
\sum_{i=1}^{3}|\nabla J_i|^2 + c\,J_3^2\,V_2(x)=\sum_{i=1,2} \frac{|\nabla J_i|^2}{1-J_i^2}+ 
c\,J_3^2\,V_2(x)\le \frac{k+2c}{r_0^2}.
\end{equation}
\end{lemma}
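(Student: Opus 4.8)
This is a purely pointwise (a.e.) statement in which the measure $\mu$ plays no role; it is the two--pole partition--of--unity lemma of \cite{BDE}, whose argument I would reproduce. The plan is to write the partition down explicitly from a radial one--dimensional profile, use the antipodal symmetry of the two poles to collapse the estimate to a single pole, and then reduce matters to a one--dimensional inequality on an interval.

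First I would fix $f\in W^{1,\infty}([0,\infty))$ with $0\le f\le 1$, $f$ decreasing, $f\equiv 1$ on $[0,\tfrac12]$ and $f\equiv 0$ on $[1,\infty)$ (to be specified later), and set
\[
J_1(x)=f\!\left(\tfrac{|x-a|}{r_0}\right),\qquad J_2(x)=J_1(-x)=f\!\left(\tfrac{|x+a|}{r_0}\right),\qquad J_3=\sqrt{1-J_1^2-J_2^2}.
\]
Then $\supp J_1=\overline{B(a,r_0)}$, $\supp J_2=\overline{B(-a,r_0)}$, and since $r_0\le|a|$ the open balls $B(a,r_0)$, $B(-a,r_0)$ are disjoint, so (\ref{omega}) holds and $J_1^2+J_2^2\le1$. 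Writing $f=\cos\theta$ with a Lipschitz $\theta\ge0$ that vanishes on $[0,\tfrac12]$, one has $|\nabla J_i|^2=r_0^{-2}\,\theta'^2(1-J_i^2)$ for $i=1,2$, i.e.\ the constraint $|\nabla J_i|^2=F(x)(1-J_i^2)$ with $F\in L^\infty$, so $\nabla J_3$ has no singularity; in particular property (d) already gives the equality in the statement, and only the bound by $(k+2c)/r_0^2$ remains to be proved.

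Next I would localise. On $\Omega=\supp J_1\cup\supp J_2$ it suffices, by the symmetry $J_2(x)=J_1(-x)$, to argue on $\supp J_1=\overline{B(a,r_0)}$; there $J_2\equiv0$, $\nabla J_2\equiv0$ and $J_3^2=1-J_1^2$. For $x\in\overline{B(a,r_0)}$ one has $|x+a|\ge 2|a|-|x-a|\ge 2r_0-r_0=r_0$, so the far pole contributes at most $c(1-J_1^2)/|x+a|^2\le c/r_0^2$. Putting $t=|x-a|/r_0\in(0,1]$ and using $f=\cos\theta$,
\[
\frac{|\nabla J_1|^2}{1-J_1^2}+\frac{c(1-J_1^2)}{|x-a|^2}=\frac{1}{r_0^2}\left(\theta'(t)^2+\frac{c\,\sin^2\theta(t)}{t^2}\right),
\]
so the whole statement reduces to producing $\theta\colon[0,\infty)\to[0,\tfrac\pi2]$, $\theta\equiv0$ on $[0,\tfrac12]$, $\theta\equiv\tfrac\pi2$ on $[1,\infty)$, with
\[
\theta'(t)^2+\frac{c\,\sin^2\theta(t)}{t^2}\le k+c\qquad\text{on }(0,1]
\]
for some $k\in[0,\pi^2)$: indeed then $\sum_{i=1}^3|\nabla J_i|^2+c\,J_3^2V_2\le r_0^{-2}(k+c)+c\,r_0^{-2}=(k+2c)/r_0^2$ on $\Omega$.

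The construction of $\theta$ is the only delicate step. Since $\theta$ must rise from $0$ to $\tfrac\pi2$ across $[\tfrac12,1]$, Cauchy--Schwarz forces $\sup_{[\frac12,1]}\theta'^2\ge\pi^2$, so one cannot take $\theta$ affine — that choice would give exactly the value $\pi^2+c$ at $t=1$. Following \cite{BDE}, one exploits that $\sin^2\theta(t)$ is small precisely near $t=\tfrac12$, where $\theta'$ is therefore allowed to exceed $\pi$, while near $t=1$, where the potential term is close to $c$, one keeps $\theta'$ below $\pi$; concretely one may let $\theta$ obey a first--order relation of the form $\theta'=\sqrt{K-c\,\sin^2\theta/t^2}$ with $K$ tuned just below $\pi^2+c$ so that the solution starting from $\theta(\tfrac12)=0$ still reaches $\tfrac\pi2$ at $t=1$, and then take $k:=K-c<\pi^2$. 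Verifying that the potential term genuinely compensates the excess of $\theta'^2$ over $\pi^2$ throughout $[\tfrac12,1]$ — so that the displayed one--dimensional inequality holds with $k$ strictly below $\pi^2$ — is where the real work lies; the rest is the bookkeeping above.
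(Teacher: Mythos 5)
First, a point of reference: the paper does not prove this lemma at all --- it is quoted from \cite[Lemma 3]{BDE} as a known preliminary --- so there is no in-paper argument to compare against, and your proposal must be judged on its own terms. Your reduction is correct and follows the BDE scheme: the radial ansatz $J_1=f(|x-a|/r_0)$ with $f=\cos\theta$, the identity $|\nabla J_i|^2=r_0^{-2}\,\theta'^2(1-J_i^2)$ which yields the stated equality via property (d), the disjointness of the supports from $r_0\le|a|$, the bound $|x+a|\ge 2|a|-|x-a|\ge r_0$ on $\supp J_1$ so that the far pole costs at most $c/r_0^2$, and the resulting reduction to finding $\theta$ with $\theta'(t)^2+c\sin^2\theta(t)/t^2\le k+c$ on $(0,1]$, $\theta\equiv 0$ on $[0,\tfrac12]$, $\theta\equiv\tfrac\pi2$ on $[1,\infty)$. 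All of that is sound bookkeeping.

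However, the one step that carries the entire content of the lemma --- producing such a $\theta$ with $k$ \emph{strictly} below $\pi^2$ --- is not proved; you write down the ansatz $\theta'=\sqrt{K-c\sin^2\theta/t^2}$ and yourself flag that verifying it ``is where the real work lies''. Two things are genuinely missing. (i) One must show that for some $K<\pi^2+c$ the solution with $\theta(\tfrac12)=0$ reaches $\tfrac\pi2$ by $t=1$. This is not automatic: along the trajectory the quantity $\sin^2\theta/t^2$ can exceed $1$ (the flow does not preclude $\sin\theta$ from crossing above $t$, since at such a crossing $\tfrac{d}{dt}\sin\theta=\theta'\cos\theta$ can exceed $1$), and wherever $\sin^2\theta/t^2>1$ one has $\theta'<\pi$, so the solution may lag behind the affine profile; a quantitative estimate is needed to show the early excess of speed compensates. (ii) If the solution reaches $\tfrac\pi2$ at some $t^*<1$ and is continued by the constant $\tfrac\pi2$, then on $[t^*,1]$ the left-hand side equals $c/t^2>c$, so one must additionally check $c\,(t^{-2}-1)\le k$ there, which constrains $t^*$ in terms of $c$ and $k$. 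Until (i) and (ii) are carried out (as in \cite{BDE}), the argument as written only delivers the borderline value $k=\pi^2$ (via the affine profile), which is exactly what the statement excludes.
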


\medskip

Now we are able to proceed with the proof.
\begin{proof}[Proof of Theorem \ref{wH}]
Let us define the following quadratic form

\begin{equation}\label{formaQ}
\begin{split}
Q[\varphi]:=\int_{\R^N}\left( |\nabla \varphi|^2 -cV_n(x)\varphi^2 \right)d\mu, \qquad \varphi\in H^1_\mu.
\end{split}
\end{equation}
By virtue of Lemma \ref{lemma2BDE} we are able to write (\ref{formaQ}) as follows
\begin{equation}\label{formaQ2}
Q[\varphi]=\sum_{i=1}^{n} Q[J_i\varphi] + R_n, \qquad \varphi\in H^1_\mu
\end{equation}
where
$$R_n =\int_{\R^N}|\nabla (J_{n+1}\varphi)|^2\, d\mu -c\int_{\R^N} V_n|J_{n+1}\varphi|^2 \,d\mu-\sum_{i=1}^{n+1}
\int_{\R^N}|\nabla J_i|^2\varphi^2\,d\mu.$$
Thanks to the property (d) we have
\begin{equation*}
\begin{split}
R_n &= \int_{\R^N}|\nabla (J_{n+1}\varphi)|^2\, d\mu -c\int_{\R^N} V_n\left(1-\sum_{i=1}^{n}J_i^2 \right)\varphi^2 \,d\mu-
\sum_{i=1}^{n}\int_{\R^N}\frac{|\nabla J_i|^2}{1-J_i^2}\varphi^2\,d\mu\\
&\ge -c\int_{\R^N}V_n(x)\left( 1-\sum_{i=1}^{n}J_i^2\right) \varphi^2\,d\mu -\sum_{i=1}^{n}
\int_{\R^N}\frac{|\nabla J_i|^2}{1-J_i^2}\varphi^2\,d\mu.
\end{split}
\end{equation*}
Let us consider a partition of unity $\left\lbrace J_i \right\rbrace_{i=1}^{n+1}$ satisfying (\ref{omega}), and the sets 
$\Omega_i=B(a_i,r_0)$ such that $\overline{\Omega}_i={\rm supp}(J_i)$, $i= 1,\dots,n $. If we set $\Omega=
\cup_{i=1}^n \overline{\Omega}_i$ and $\Gamma =\R^N \setminus \Omega$, then $|x-a_i|\ge r_0$ in $\overline{\Omega}_j$ for
 $i\neq j$, and $V_n(x)\le\frac{n}{r_0^2}$ on $\Gamma$. \\
Moreover, using the condition (\ref{omega}) we get
\begin{equation*}
R_n \ge-\sum_{i=1}^{n}\int_{\Omega_i}\left[ \frac{|\nabla J_i|^2}{1-J_i^2}+c\left(1-J_i^2\right)V_n(x)\right]\varphi^2 \,d\mu-
\frac{c\,n}{r_0^2}\int_{\Gamma}\varphi^2\,d\mu. 
\end{equation*}
Taking into account that $J_j=0$ on $\Omega_i$ for any $j\neq i$,  we have for $j\neq i$
\begin{equation*}
\begin{split}
R_n \ge&-\sum_{i=1}^{n}\int_{\Omega_i}\Biggl[ \frac{|\nabla J_i|^2}{1-J_i^2}+
\frac{|\nabla J_j|^2}{1-J_j^2}+ c\left(1-J_i^2-J_j^2\right)
\left(\frac{1}{|x-a_i|^2}+\frac{1}{|x-a_j|^2} \right) 
\\&
+\biggl. c\left(1-J_i^2\right)\left( \sum_{k\ne i,j}\frac{1}{|x-a_k|^2}\right) \Biggr]\varphi^2 \,d\mu-
\frac{c\,n}{r_0^2}\int_{\Gamma}\varphi^2\,d\mu,
\end{split} 
\end{equation*}
%(We can obtain this choosing e.g. $j=i+1$ for $i=1,\dots,n-1$, and $j=1$ for $i=n$.)\\
Now, taking $\left\lbrace J_i, J_j, \sqrt{1-J_i^2-J_j^2} \right\rbrace $ as  the partition of unity, we can apply Lemma \ref{lemma3BDE} 
on $\Omega_i$ with $(a_i,a_j)=(-a,a)$ up to a change of coordinates. In this way we get
\begin{equation}
\begin{split}\label{Rn}
R_n\ge& -\sum_{i=1}^{n} \int_{\Omega_i} \left[ \frac{k+2c}{r_0^2}+c(1-J_i^2)\left( \sum_{k\ne i,j}\frac{1}{|x-a_k|^2} \right)  \right] 
\varphi^2 \, d\mu -\frac{c\,n}{r_0^2}\int_{\Gamma}\varphi^2 \, d\mu\\
\ge &-\sum_{i=1}^{n} \int_{\Omega_i} \left[ \frac{k+2c}{r_0^2}+\frac{(n-2)c}{r_0^2}(1-J_i^2) \right] \varphi^2 \, d\mu -
\frac{c\,n}{r_0^2}\int_{\Gamma}\varphi^2 \, d\mu,
\end{split}
\end{equation}
since we can estimate $\frac{1}{|x-a_k|^2}$ by $\frac{1}{r_0^2}$ for all $k\neq i,j$.
Taking into account (\ref{formaQ}) and using the weighted Hardy inequality (\ref{lebesgue pole}) with $n=1$ we get
\begin{equation*}
\begin{split}
Q[J_i \varphi]=&\int_{\R^N}|\nabla J_i\varphi|^2\, d\mu - c\int_{\R^N}\Biggl( \frac{1}{|x-a_i|^2}+
\sum_{\substack{j=1\\j\neq i}}^{n}\frac{1}{|x-a_j|^2} \Biggr)|J_i\varphi|^2\, d\mu \\
\ge& -\left[ \frac{1}{2}{\rm Tr} A+\frac{(n-1)c}{r_0^2}\right] \int_{\Omega_i} |J_i\varphi|^2 \, d\mu, 
\end{split}
\end{equation*}
from which
\begin{equation}\label{sommaQj}
\sum_{i=1}^{n}Q[J_i \varphi] \ge - \frac{1}{2}{\rm Tr} A\sum_{i=1}^{n}\int_{\Omega_i} \varphi^2 \, d\mu -
\frac{(n-1)c}{r_0^2}\sum_{i=1}^{n} \int_{\Omega_i} J_i^2\varphi^2 \, d\mu 
\end{equation}
From (\ref{formaQ2}), (\ref{Rn}) and (\ref{sommaQj}) we deduce
\begin{equation*}
\begin{split}
Q[\varphi]\ge &- \sum_{i=1}^{n}\int_{\Omega_i}\left[\frac{k+2c}{ r_0^2}+\frac{(n-2)c}{ r_0^2}(1-J_i^2)+
\frac{1}{2}{\rm Tr} A+\frac{(n-1)c}{ r_0^2}J_i^2\right] \varphi^2 \, d\mu\\
& - \frac{c\,n}{r_0^2}\int_{\Gamma}\varphi^2 \, d\mu.
\end{split}
\end{equation*}
Since
\begin{equation*}
k+2c+c(n-2)(1-J_i^2)+c(n-1)J_i^2 = k+cn+cJ_i^2\le k+c(n+1),
\end{equation*}
we finally obtain
\begin{equation*}
\begin{split}
Q[\varphi]\ge & -\left[\frac{k+(n+1)c}{ r_0^2}+\frac{1}{2}{\rm Tr} A\right] \int_{\Omega}\varphi^2 \, d\mu - 
\frac{c\,n}{r_0^2}\int_{\Gamma}\varphi^2 \, d\mu\\
\ge& -\left[\frac{k+(n+1)c}{ r_0^2}+\frac{1}{2} {\rm Tr} A\right] \int_{\R^N}\varphi^2 \, d\mu ,
\end{split}
\end{equation*}
\end{proof}

\bigskip\bigskip

\section{ Existence of solutions via weighted Hardy inequality}

The potential $ V(x)=\sum_{i=1}^n \frac{c}{|x-a_i|^2}$ 
and the Gaussian density $\mu(x)$ satisfy the hypotheses of the Theorem \ref{theor as CM}.
We can therefore state the following existence and nonexistence result
as a consequence of the weighted Hardy inequality (\ref{gaussian hardy}) and of the Theorem \ref{theor as CM}.

\begin{thm}\label{Hardy e CM theor}
Assume that $N\geq 3$, $A$ a positive definite real Hermitian $N\times N$-matrix and $0\leq V(x)\leq
 \sum_{i=1}^n\frac{c}{|x-a_i|^2}$, with $c> 0$, $x, a_i \in \R^N$, $i \in \lbrace 1,\dots, n\rbrace$. 
Let $L$ the Ornstein-Uhlenbeck type operator (\ref{OU}). Then the following assertions hold:
\begin{itemize}
\item[i)] If $c \leq c_o$ there exists a positive weak solution $u\in C\left(\left[ 0,\infty\right), L^2_\mu\right)$ of
\begin{equation}\label{problemA}
\left\lbrace
\begin{array}{ll}
\partial_t u(x,t)=L+V(x)u(x,t),\quad x\in \R^N, t>0,
\\
u(\cdot,t)=u_0\in L^2_\mu,
\end{array}\right.
\end{equation}
satisfying 
\begin{equation}\label{stima2}
\|u(t)\|_{L^2_\mu}\leq M e^{\omega t}\|u_0\|_{L^2_\mu}, \qquad t\geq 0
\end{equation}
for some constants $M\geq1$, $\omega\in \R$, and any $u_0\in L^2_\mu$.
\item[ii)] If $c > c_o$ there exists no positive weak solution of (\ref{problemA}) with $V(x)=
\sum_{i=1}^n\frac{c}{|x-a_i|^2}$ satisfying (\ref{stima2}) for any $0\leq u_0\in L^2_\mu$, $u_0\neq 0$.
\end{itemize}
\end{thm}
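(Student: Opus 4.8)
The plan is to deduce Theorem~\ref{Hardy e CM theor} directly from Theorem~\ref{theor as CM} by verifying that its hypotheses are met and by translating the weighted Hardy inequality of Theorem~\ref{wH} into the statement $\lambda_1(L+V)>-\infty$ (respectively $=-\infty$). First I would observe that the Gaussian density $\mu$ in (\ref{mu}) is manifestly a positive probability density lying in $C^{\infty}(\R^N)\subset C^{1,\alpha}_{loc}(\R^N)$, and that any $V$ with $0\le V(x)\le\sum_{i=1}^n c|x-a_i|^{-2}$ belongs to $L^1_{loc}(\R^N)$ (the singularities $|x-a_i|^{-2}$ are locally integrable in dimension $N\ge 3$, and a finite sum of them still is). Thus the standing assumptions of Theorem~\ref{theor as CM} hold, and it suffices to control the bottom of the spectrum
\[
\lambda_1(L+V)=\inf_{\varphi\in H^1_\mu\setminus\{0\}}\frac{\int_{\R^N}|\nabla\varphi|^2\,d\mu-\int_{\R^N}V\varphi^2\,d\mu}{\int_{\R^N}\varphi^2\,d\mu}.
\]

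For part~(i), assuming $c\le c_o$, I would use $0\le V\le\sum_i c|x-a_i|^{-2}$ together with the weighted Hardy inequality (\ref{gaussian hardy}) of Theorem~\ref{wH}: for every $\varphi\in H^1_\mu$,
\[
\int_{\R^N}V\varphi^2\,d\mu\le c\int_{\R^N}\sum_{i=1}^n\frac{\varphi^2}{|x-a_i|^2}\,d\mu\le\int_{\R^N}|\nabla\varphi|^2\,d\mu+K\int_{\R^N}\varphi^2\,d\mu,
\]
with $K=\frac{k+(n+1)c}{r_0^2}+\frac{n}{2}\Tr A$ (any admissible $k\in[0,\pi^2)$ works; for $n=1$ or $c\le c_o/n$ one may instead invoke Theorem~\ref{improved constant} and take $K=\frac n2\Tr A$). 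This gives $\int|\nabla\varphi|^2\,d\mu-\int V\varphi^2\,d\mu\ge -K\int\varphi^2\,d\mu$, hence $\lambda_1(L+V)\ge -K>-\infty$. Theorem~\ref{theor as CM}(i) then yields a positive weak solution $u\in C([0,\infty),L^2_\mu)$ of (\ref{problemA}) satisfying the exponential bound (\ref{stima2}) with $M\ge 1$ and $\omega\in\R$ (explicitly one may take $\omega$ any number exceeding $-\lambda_1(L+V)$, e.g.\ $\omega=K$).

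For part~(ii), with $V(x)=\sum_{i=1}^n c|x-a_i|^{-2}$ and $c>c_o$, I would invoke the optimality argument already carried out in Step~2 of the proof of Theorem~\ref{wH}: fixing any pole $a_i$ and testing with $\varphi_\gamma=|x-a_i|^\gamma$ for $\gamma\downarrow 1-\tfrac N2$, the Rayleigh quotient was shown there to tend to $-\infty$, because $\gamma^2-c\to(\tfrac{N-2}{2})^2-c<0$ while the ratio of the Gaussian integrals $\int|x-a_i|^{2(\gamma-1)}d\mu\big/\int|x-a_i|^{2\gamma}d\mu$ blows up like $(\gamma+\tfrac N2-1)^{-1}$. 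Since $V\ge c|x-a_i|^{-2}$, the quotient for $V$ is bounded above by that for the single pole, so $\lambda_1(L+V)=-\infty$. Applying Theorem~\ref{theor as CM}(ii) then gives that for any $0\le u_0\in L^2_\mu\setminus\{0\}$ there is no positive weak solution of (\ref{problemA}) satisfying (\ref{stima2}), which is the claim.

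The argument is essentially a bookkeeping reduction, so there is no serious obstacle; the only point requiring a little care is making sure the hypotheses of Theorem~\ref{theor as CM} are literally satisfied by the Ornstein--Uhlenbeck generator $L$ of (\ref{OU}) acting on $L^2_\mu$ — in particular that its associated semigroup has the strict-positivity-on-compacts and core properties that the proof of Theorem~\ref{theor as CM} relies on. These were established for Ornstein--Uhlenbeck type operators in the references cited in Section~2 (cf.\ \cite{MPRS} and \cite[Chapter~8]{BertoldiLorenzi}), so one simply needs to cite them; everything else is the two-line spectral-bottom estimate above.
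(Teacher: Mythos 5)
Your proposal is correct and follows exactly the route the paper takes: the paper presents Theorem \ref{Hardy e CM theor} as an immediate consequence of Theorem \ref{theor as CM} combined with the weighted Hardy inequality (\ref{gaussian hardy}) for part (i) and the optimality argument of Step 2 of Theorem \ref{wH} for part (ii). Your write-up simply makes explicit the same bookkeeping (local integrability of $V$, regularity of $\mu$, and the two bounds on $\lambda_1(L+V)$) that the paper leaves implicit.
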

\bigskip
%Nota: a_c e' settoriale perchè bilineare (non sesquilineare) quasi-accretiva, 
%dunque l'op. associato A e' settoriale, il semigruppo e' analitico.

Following a different approach based on bilinear forms associated to the operator $-(L+V)$, 
we obtain an existence result.
We state the generation of an analytic $C_0$-semigroup. 

Let us define the bilinear form 
\begin{equation}\label{a_c}
a_c(u,v):=\int_{\R^N}\nabla u \cdot \nabla v \, d\mu - 
c\sum_{i=1}^n \int_{\R^N} \frac{uv}{|x-a_i|^2} \, d\mu 
\end{equation}
for $u,v\in D(a_c)=H^1_\mu$, $N\geq 3$ and $c>0$. 

Arguing as in \cite[Propositions 2.2 and 2.3]{ArendtGoldstein}, we can get the next result. 

\begin{prop}
The following statements hold:
\begin{itemize}
\item[ i)] $a_c$ is closed if $c < c_o$;
\item[ ii)] $a_{c_o}$ is closable; 
\end{itemize}
\end{prop}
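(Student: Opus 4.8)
The statement concerns the bilinear form $a_c$ defined in (\ref{a_c}) with form domain $H^1_\mu$: we must show it is closed for $c<c_o$ and closable (but, implicitly, not closed) for $c=c_o$. The natural strategy is to reduce both assertions to the weighted Hardy inequality of Theorem \ref{wH} together with a Cauchy--Schwarz/Young argument, exactly in the spirit of \cite[Propositions 2.2 and 2.3]{ArendtGoldstein}. The plan is to produce a two-sided comparison between the form $a_c$ (shifted by a large multiple of the $L^2_\mu$-inner product) and the reference form $a_\mu(u,v)+\|u\|_{L^2_\mu}^2$, whose closedness is already known since $H^1_\mu$ is complete in the norm $\|\cdot\|_{H^1_\mu}$.

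\textbf{Step 1: the case $c<c_o$.} First I would fix $c<c_o$ and apply Theorem \ref{wH} (or, more conveniently, the version giving the optimal constant $c_o$ with an explicit additive constant; any admissible $K$ will do here). Writing $K$ for the constant on the right-hand side of (\ref{gaussian hardy}), one gets
\begin{equation*}
c\sum_{i=1}^n\int_{\R^N}\frac{u^2}{|x-a_i|^2}\,d\mu\le \frac{c}{c_o}\left(\int_{\R^N}|\nabla u|^2\,d\mu+K\int_{\R^N}u^2\,d\mu\right),\qquad u\in H^1_\mu,
\end{equation*}
after replacing $c$ by $c_o$ inside the Hardy inequality and multiplying by $c/c_o<1$. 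Consequently, for the shifted form $a_c(u,u)+\beta\|u\|_{L^2_\mu}^2$ with $\beta$ chosen large enough (e.g.\ $\beta\ge \frac{c}{c_o}K$), one obtains the coercivity estimate
\begin{equation*}
\left(1-\frac{c}{c_o}\right)\|\nabla u\|_{L^2_\mu}^2\le a_c(u,u)+\beta\|u\|_{L^2_\mu}^2\le \left(1+\frac{c}{c_o}\right)\|\nabla u\|_{L^2_\mu}^2+\left(\beta+\frac{c}{c_o}K\right)\|u\|_{L^2_\mu}^2,
\end{equation*}
so the form norm associated to $a_c+\beta\,\|\cdot\|_{L^2_\mu}^2$ is equivalent to $\|\cdot\|_{H^1_\mu}$. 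Since $(H^1_\mu,\|\cdot\|_{H^1_\mu})$ is complete, $a_c$ is closed (it is also symmetric and, by the lower bound, bounded below). Accuracy of the constants is routine; the only thing to check is that the potential term is a genuinely bounded perturbation relative to the Dirichlet form with relative bound $c/c_o<1$, which is precisely what Theorem \ref{wH} delivers.

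\textbf{Step 2: the case $c=c_o$, closability.} For closability it suffices to show that $a_{c_o}$ is bounded below and symmetric, hence its associated quadratic form is closable as soon as it is lower semicontinuous along $H^1_\mu$-convergent sequences; equivalently, one shows: if $u_m\to 0$ in $L^2_\mu$ and $(u_m)$ is $a_{c_o}$-Cauchy, then $a_{c_o}(u_m,u_m)\to 0$. The clean way is to invoke Theorem \ref{wH} at the endpoint $c=c_o$ directly: it gives $a_{c_o}(u,u)+K\|u\|_{L^2_\mu}^2\ge 0$, so $a_{c_o}+K\,\|\cdot\|_{L^2_\mu}^2$ is a nonnegative symmetric form. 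A nonnegative symmetric form is closable if and only if it is lower semicontinuous; here lower semicontinuity follows because each term $\int |\nabla u|^2\,d\mu$ is weakly lower semicontinuous and the potential terms $\int u^2/|x-a_i|^2\,d\mu$ are, along a sequence bounded in $H^1_\mu$ and convergent in $L^2_\mu$, convergent by Fatou combined with the uniform Hardy bound (one extracts an a.e.\ convergent subsequence and uses that the Hardy inequality controls the potential integrals uniformly, so no mass escapes). Therefore $a_{c_o}+K\,\|\cdot\|_{L^2_\mu}^2$, and hence $a_{c_o}$, is closable. I would present this as: the relative bound of the potential with respect to the Dirichlet form is now exactly $1$, which destroys coercivity (so one does not expect closedness) but still leaves a nonnegative, lower-semicontinuous, hence closable, form.

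\textbf{Main obstacle.} The delicate point is the endpoint $c=c_o$: the comparison argument of Step 1 collapses because the factor $1-c/c_o$ vanishes, so one cannot conclude closedness, and indeed the form is not closed. The substantive work is to justify closability without coercivity, i.e.\ to prove the lower-semicontinuity of $u\mapsto a_{c_o}(u,u)$ on $H^1_\mu$; this is where one must argue carefully that, for a sequence Cauchy in the $a_{c_o}$-norm and null in $L^2_\mu$, the singular integrals $\int u_m^2/|x-a_i|^2\,d\mu$ pass to the limit — invoking the optimal Hardy inequality (Theorem \ref{wH}) to get a uniform bound, then a.e.\ convergence along a subsequence and Fatou. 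Modulo that, everything reduces to the already-established Theorem \ref{wH} and the completeness of $H^1_\mu$, exactly as in \cite[Propositions 2.2 and 2.3]{ArendtGoldstein}.
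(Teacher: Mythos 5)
Your Step 1 is correct and is exactly the argument behind \cite[Proposition 2.2]{ArendtGoldstein}, which the paper invokes without reproducing: the weighted Hardy inequality makes the potential form-bounded relative to the Dirichlet form with relative bound $c/c_o<1$, so the shifted form norm is equivalent to $\|\cdot\|_{H^1_\mu}$ and closedness follows from the completeness of $H^1_\mu$.

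Step 2 contains a genuine gap. The closability criterion for the nonnegative form $q:=a_{c_o}+K\|\cdot\|_{L^2_\mu}^2$ requires: if $u_m\to0$ in $L^2_\mu$ and $q(u_m-u_k)\to0$, then $q(u_m)\to0$. At the endpoint the coercivity factor $1-c/c_o$ vanishes, so a $q$-Cauchy sequence need \emph{not} be bounded in $H^1_\mu$; the hypothesis ``along a sequence bounded in $H^1_\mu$'' on which your semicontinuity argument rests is therefore not available. Moreover, even for $H^1_\mu$-bounded sequences, Fatou yields $\liminf_m\int u_m^2|x-a_i|^{-2}\,d\mu\ge\int u^2|x-a_i|^{-2}\,d\mu$, i.e.\ \emph{lower} semicontinuity of the potential integral; since this term enters $a_{c_o}$ with a minus sign, this gives \emph{upper} semicontinuity of the form --- the wrong direction. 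The genuine convergence of the potential integrals that you assert is precisely what can fail at $c=c_o$: the critical Hardy bound does not exclude concentration of $\int V u_m^2\,d\mu$ at the poles, which is the very reason the form fails to be closed there. The standard repair, and the route of \cite[Proposition 2.3]{ArendtGoldstein}, is different: on a suitable dense subspace (e.g.\ smooth compactly supported functions vanishing near the poles, dense in $H^1_\mu$ since points have zero capacity for $N\ge3$) one has $a_{c_o}(u,v)=\left(-(L+V)u,v\right)_{L^2_\mu}$ with $-(L+V)$ symmetric and, by the Hardy inequality, bounded from below; a form induced by a symmetric bounded-below operator is always closable, its closure being the form of the Friedrichs extension. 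This avoids any semicontinuity claim about the singular term.
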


Furthermore $a_c$ is quasi-accretive for all $c\in \left( 0, c_o \right]$. 
In fact by the weighted Hardy inequality (\ref{gaussian hardy}) we immediately get
$$a_c(u,u)\ge -K\left(u,u \right)_{H^1_\mu}$$
for all $u\in H^1_\mu$, with $K$ the constant on the right-hand side in the inequality.

Then, for $c < c_o$, the associated operator $\mathcal{A}$ on $L^2_\mu$  defined by
\begin{equation}
D(\mathcal{A})=\left\lbrace
\begin{split}
u \in D(a_c) : \exists\, v \in L^2_\mu \quad\textit{s. t.} \quad   
  a_c(u,\phi)=\int_{\R^N} v\phi \, d\mu \quad \forall \phi \in D(a_c)   
\end{split} 
\right\rbrace ,
\notag 
\end{equation}
$$ \mathcal{A}u=v .$$
Then $-\mathcal{A}=L+V$ generates an analytic $C_0$-semigroup $\lbrace S(t)
\rbrace_{t\ge 0}$ on $L^2_\mu$ satisfying
$$\|S(t)\|\leq e^{K t}, \quad t\geq 0.$$
For the case $c = c_o$ the same conclusion holds taking the closure $\overline{a_{c_o}}$ 
instead of $a_{c_o}$ in the definition of $\mathcal{A}$.

The positivity of the solution $u$ can be obtained as in \cite[Section 2]{ArendtGoldstein}.
Indeed, we can regard $S(t)$ as the limit of positive preserving semigroups described by cut-off potentials.

Let $\mathcal{A}_k=L+\min\left( V, c k\right) $, $k\in \mathbb N$.
Since $L$ is the generator of a positive preserving semigroup on $L^2_\mu$ and
 $\min\left( V, k\right)$ is bounded and non-negative, $\mathcal{A}_k$ 
generates a positive preserving semigroup, denoted by $S_k(t)$. Moreover
$$0\le S_k(t)\le S_{k+1}(t).$$
% e' una disuguaglianza tra operatori esponenziali
If $c\le c_o$ it follows from the monotone convergence theorem for forms (cf \cite[Theorem S.14]{Reed})
that
$$\lim\limits_{k\to \infty}S_k(t)=S(t)$$
strongly in $L^2_\mu$. Then $u(t)=S(t)u_0$ is positive.

Finally, as in  \cite[Proposition 2.5]{ArendtGoldstein}, we can observe that if $c>c_o$ then 
$$\lim\limits_{k\to \infty} \|S_k(t)\|=\infty, \quad t>0.$$

\bigskip\bigskip\bigskip

\section*{Appendix}\label{dim equivalenza moduli}
%\bigskip

Let us state the following estimates
\begin{equation}
\begin{split}
-\sum_{j\ne i}|a_i-a_j|^2+\frac{n+1}{2}|x-a_i|^2\le & \sum_{i=1}^n|x-a_i|^2
\\ &
\le(2n-1)|x-a_i|^2+2\sum_{j\ne i}|a_i-a_j|^2
\end{split}
\end{equation}
for any $i,j\in\{1,\dots, n\}$.

In fact 
$$|x-a_j|^2=|x-a_i+a_i-a_j|^2\le 2|x-a_i|^2+2|a_i-a_j|^2$$
and
$$|x-a_j|^2\ge \frac{|x-a_i|^2}{2}-|a_i-a_j|^2.$$
As a consequence we obtain
$$\sum_{i=1}^n|x-a_i|^2=|x-a_i|^2+\sum_{j\ne i}|x-a_j|^2\le
 |x-a_i|^2+2(n-1)|x-a_i|^2+2\sum_{i\ne j}^n|a_i-a_j|^2$$
and 
$$\sum_{i=1}^n|x-a_i|^2\ge |x-a_i|^2+\frac{n-1}{2}|x-a_i|^2-\sum_{i\ne j}^n|a_i-a_j|^2.$$

\bigskip\bigskip\bigskip

\end{document}